\newtheorem{theorem}{Theorem}[section]
\newtheorem{lemma}[theorem]{Lemma}
\theoremstyle{definition}
\newtheorem{definition}[theorem]{Definition}
\newtheorem{cor}[theorem]{Corollary}
\newtheorem{remark}[theorem]{Remark}
\title{Non-backtracking eigenvalues and eigenvectors of random regular graphs and hypergraphs}
\author{Xiangyi Zhu}
\address{Department of Mathematics, University of California Irvine, Irvine, CA 92697}
\email{xiangz27@uci.edu}
\author{Yizhe Zhu}
\address{Department of Mathematics, University of California Irvine, Irvine, CA 92697}
\email{yizhe.zhu@uci.edu}
\date{\today}
\begin{document}

\maketitle

\begin{abstract}
The non-backtracking operator of a graph is a powerful tool in spectral graph theory and random matrix theory. Most existing results for the non-backtracking operator of a random graph concern only eigenvalues or top eigenvectors. In this paper, we take the first step in analyzing its bulk eigenvector behaviors. We demonstrate that for the non-backtracking operator $B$ of a random $d$-regular graph, its eigenvectors corresponding to nontrivial eigenvalues are completely delocalized with high probability. Additionally, we show complete delocalization for a reduced $2n \times 2n$ non-backtracking matrix $\tilde{B}$. By projecting all eigenvalues of $\tilde{B}$ onto the real line, we obtain an empirical measure that converges weakly in probability to the Kesten-McKay law for fixed $d\geq 3$ and to a semicircle law as $d \to\infty$  with $n \to\infty$.
We extend our analysis to random regular hypergraphs, including the limiting measure of the real part of the spectrum for $\tilde B$, $\ell_{\infty}$-norm bounds for the eigenvectors of $\tilde B$ and $B$, and a deterministic relation between eigenvectors of $B$ and the eigenvectors of the adjacency matrix. 

As an application, we analyze the non-backtracking spectrum of the $(d_1,d_2)$-regular stochastic block model (RSBM) and provide a spectral method based on eigenvectors of $\tilde B$ to recover the community structure exactly. We also show that there exists an isolated real eigenvalue with an informative eigenvector inside the circle of radius $\sqrt{d_1+d_2-1}$ in the spectrum of $B$, analogous to the ``eigenvalue insider" phenomenon for the Erd\H{o}s-R\'{e}nyi stochastic block model conjectured in \cite{dall2019revisiting}.

\end{abstract}

\section{Introduction}

\subsection{Non-backtracking operators of random graphs}
The non-backtracking operator is an important object in the study of spectral graph theory \cite{terras2010zeta,angel2015non,bass_iharaselberg_1992,kempton2016non,glover2021some,mulas2024there,jost2023spectral,torres2021nonbacktracking}. It has recently been used as a powerful tool for studying random matrices \cite{bordenave2018nonbacktracking,bordenave2020new,benaych2020spectral} and for designing efficient algorithms in community detection and matrix completion \cite{krzakala2013spectral,bordenave2018nonbacktracking,stephan2020non,bordenave2020detection,stephan2022sparse,stephan2023non}. Many recent results on the spectrum of the non-backtracking operator have been established in various graph models, such as Erdős-Rényi graphs \cite{wang2017limiting}, stochastic block models \cite{bordenave2018nonbacktracking,coste2021eigenvalues}, inhomogeneous random graphs \cite{benaych2020spectral,dumitriu2022extreme}, random regular graphs \cite{bordenave2020new}, and bipartite biregular graphs \cite{brito2021spectral}. Very recently, some of these results have also been extended to hypergraphs \cite{dumitriu2021spectra,stephan2022sparse,chodrow2023nonbacktracking}. However, so far, these results mainly focus on top eigenvalues and eigenvectors and global spectral distribution, while not much is known about bulk eigenvector behavior.  

In this paper, we take a first step towards understanding the bulk eigenvectors of the non-backtracking operator in random graphs. Eigenvector delocalization is an important topic in the study of random matrices, demonstrating that many matrix models exhibit behavior similar to their Gaussian analog, a phenomenon known as universality. A unit eigenvector is considered delocalized if its infinity norm is close to that of a uniformly distributed random vector on the unit sphere up to a polylog factor. Eigenvector delocalization has been shown for many random matrix models, including non-Hermitian ones. However, for sparse and non-Hermitian models, the literature includes only a few results, such as those on eigenvector delocalization in random regular digraphs \cite{litvak2019structure} and Erdős-Rényi digraphs \cite{he2023edge}.

The non-backtracking operator \( B \) for a random \( d \)-regular graph is a sparse and non-Hermitian operator, where each row and column of \( B \) has exactly \( (d-1) \) many nonzero entries. It can also be viewed as the adjacency matrix of a random \( (d-1) \)-regular digraph with \( (nd) \) many vertices. However, not all \( (d-1) \)-regular digraphs can be obtained in this way (see \cite[Theorem 2.3]{mulas2024there}), and the eigenvalue distribution of $B$ is very different from the conjectured oriented Kesten-McKay law for a uniformly chosen random \( (d-1) \)-regular digraph \cite{bordenave2012around}. A simulation of the non-backtracking spectrum of a random $d$-regular graph is shown in Figure \ref{fig:eigenvalues}.

 \begin{figure} 
\centering\includegraphics[width=1\linewidth,height=0.25\textheight]{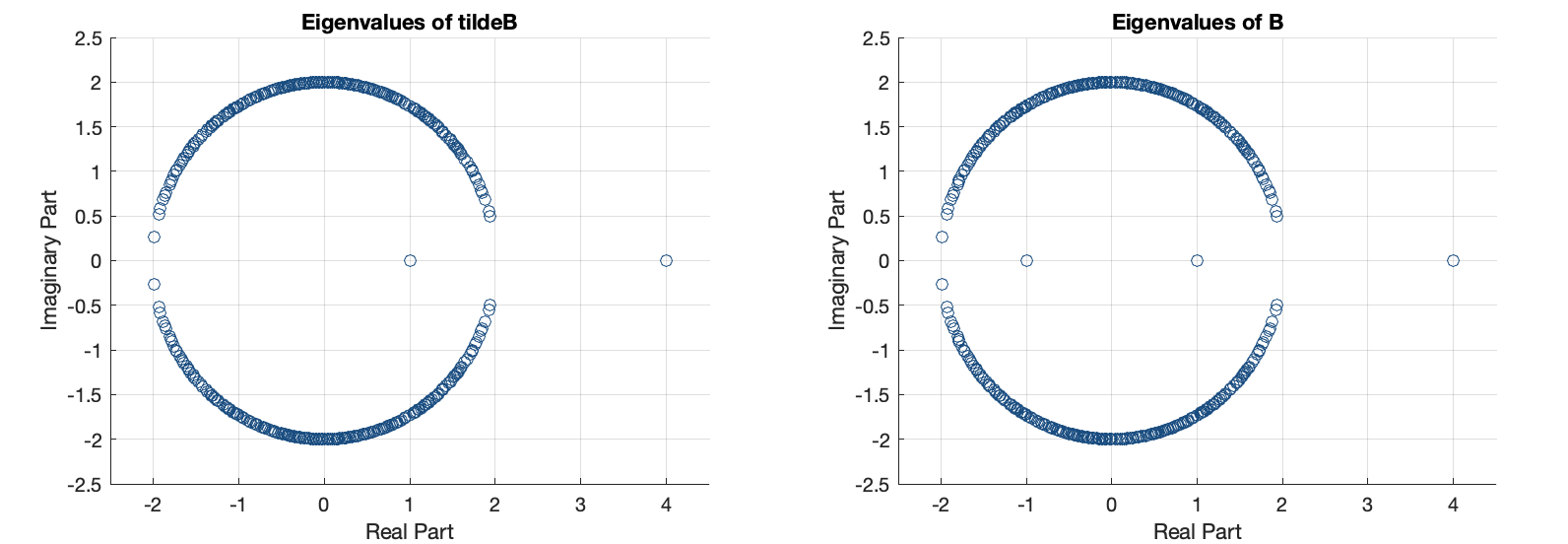}
    \vspace{-15pt}
    \caption{Simulation for eigenvalues of  $ \tilde B$ and $B$ for a random $5$-regular graph with 200 vertices.}
    \label{fig:eigenvalues}
\end{figure}
In Theorem \ref{thm:B}, we show that with high probability, all eigenvectors corresponding to nontrivial non-backtracking eigenvalues are completely delocalized. The main idea behind the proof is to use the delocalization \cite{huang2021spectrum} and spectral gap \cite{huang2021spectrum,bordenave2020new} results for the adjacency matrix of a random \( d \)-regular graph to understand eigenvector delocalization for the non-backtracking matrix with the help of the Ihara-Bass formula \cite{bass_iharaselberg_1992}. The algebraic structure of the non-backtracking operator enables the translation of eigenvalue and eigenvector information from the corresponding adjacency matrix. However, this precise algebraic connection between these two operators is only available for regular graphs. Exploring beyond \( d \)-regular graphs is an interesting future direction.

In the course of proving non-backtracking eigenvector delocalization, we show in Theorem \ref{thm:real} the convergence in probability to the Kesten-McKay law and semicircle law for fixed \( d \) and growing \( d \) when projecting the eigenvalues of the non-backtracking operator onto the real line. From the Ihara-Bass formula, another \( 2n \times 2n \) non-Hermitian block matrix \( \tilde B \) closely related to \( B \) emerges, which we call a reduced non-backtracking operator \cite{angel2015non,wang2017limiting,coste2021eigenvalues,stephan2019robustness}. We also demonstrate that the eigenvectors of \( \tilde B \) are completely delocalized with high probability in Theorem \ref{thm:tildeB}.

\subsection{Extension to regular hypergraphs}
 The spectral theory for hypergraphs has drawn considerable interest due to its applications in number theory, community detection, and network analysis. \cite{feng1996spectra,li2004ramanujan,sole1996spectra,storm2006zeta} In \cite{angelini2015spectral,stephan2022sparse,chodrow2023nonbacktracking},  spectral algorithms based on the non-backtracking eigenvectors were studied for community detection problems in hypergraph stochastic block models. Several results we obtained for random regular graphs can be generalized to random regular hypergraphs, which is a popular hypergraph model studied in combinatorics, statistical physics, and computer science \cite{cooper1996perfect,dumitriu2021spectra,greenhill2022spanning,chang2023upper,sen2018optimization}.
 
 In Theorem~\ref{thm:bi_real}, we show that by projecting the eigenvalues of a non-backtracking operator $B$ for random regular hypergraphs, we can obtain different limiting measures depending on the regime of $d,k$. We provide an exact spectral relation between a reduced non-backtracking operator $\tilde B$ and the adjacency matrix $A$ of a regular hypergraph Lemma in \ref{lem:bi_eigenvalues}.  A precise relation between eigenvectors of $B$ and $A$ is given in  Lemma~\ref{lem:eigenvector_hypergraph}, which generalizes the result for regular graphs in \cite{lubetzky2016cutoff}. These relations are used to show $\ell_{\infty}$-norm bound on eigenvectors of $\tilde B$ and $B$ in Theorem~\ref{thm:bi_tildeB}.

\subsection{Application in a regular stochastic block model}
Community detection in the stochastic block model is an important topic in statistical physics, machine learning, and network science \cite{abbe2018community}. Using the non-backtracking operator to detect the community structure is proven to achieve the information-theoretical threshold in many settings \cite{bordenave2018nonbacktracking,stephan2022sparse}. However, the spectrum of the non-backtracking operator for stochastic block models is not fully understood. An intriguing behavior of the eigenvalue for the non-backtracking matrix $B$ in the Erd\H{o}s-R\'{e}nyi stochastic block model with constant expected degree was observed in \cite{dall2019revisiting} that there exists a real eigenvalue inside the bulk spectrum of $B$ close to the ratio of the top two eigenvalues of $B$. This was justified in \cite{coste2021eigenvalues} in the dense regime when the expected degree is $\omega(\log n)$. We study an analog of the stochastic block model in the random regular graph setting, which is called the \textit{regular stochastic block model}  (RSBM) \cite{brito2016recovery,barucca2017spectral,newman2014equitable} studied in the literature, which exhibits different behaviors from the Erd\H{o}s-R\'{e}nyi SBM.  The RSBM is constructed by combining a random $d_2$-regular bipartite regular graph and a random  $d_1$-regular graph; see Definition~\ref{def:RSBM} for a precise description.

We confirm the \textit{eigenvalue insider} phenomenon in the RSBM in Theorem~\ref{thm:RSBM} and Corollary~\ref{cor:recovery}: When $(d_1-d_2)^2>4(d_1+d_2-1)$, a real eigenvalue of $\tilde B$ exists inside the circle of radius $\sqrt{d_1+d_2-1}$, and the corresponding eigenvector reveals the community structure exactly.  In contrast to the Kesten-Stigum threshold in the Erd\H{o}s-R\'{e}nyi SBM \cite{mossel2015reconstruction}, even when $(d_1-d_2)^2<4(d_1+d_2-1)$, other methods were conjectured to achieve exact recovery in the RSBM in the statistical physics literature \cite{barucca2017spectral}.

\subsection*{Organization of the paper}
The rest of the paper is organized as follows. In Section \ref{sec:prelim}, we provide some basic notations and definitions for regular graphs, the non-backtracking operator, and hypergraphs. In Section \ref{sec:main}, we state the main results for random regular graphs, random regular hypergraphs, and the regular stochastic block model, respectively. Sections \ref{sec:thm1} to \ref{sec:proof_RSBM} contain all the proofs. In Section \ref{sec:ER}, we discuss the open problem of generalizing the results to Erdős-Rényi graphs.


\section{Preliminaries}\label{sec:prelim}
\subsection{Regular graphs}
Let $G=(V,E)$ be a graph. $G$ is $d$-regular of size $n$ if each vertex has degree $d$ and $|V|=n$. For given $n$ and $d$, we say $G$ is a \textit{random $d$-regular graph} if it is uniformly chosen from all $d$-regular graphs with $n$ vertices.

The $(i,j)$-th entry of the \textit{adjacency matrix} $A$ of a graph $G$ is defined as 
\[ A_{ij}=\begin{cases} 
    1  &  \text{if } \{i,j\}\in E,\\
    0 & \text{otherwise.}
\end{cases}\]
The degree matrix $D$ of a graph $G$ is a diagonal matrix where $D_{ii}=\sum_{j\in V} A_{ij}$.
Define the oriented edge set $\vec{E}$ for $G$ as 
\begin{align} \label{eq:def_oriented}
\vec{E}= \{ (i,j): \{ i,j\} \in E\}.
\end{align}
Each edge yields two oriented edges; therefore, $|\vec{E}| =2|E|$. 

\begin{definition}[Non-backtracking operator]
  The \textit{non-backtracking operator} $B$ of $G$ is a non-Hermitian operator of size $|\vec E| \times |\vec E|$. For any $(u,v), (x,y)\in \vec{E}$, $B$ is defined as $$
B_{(u,v),(x,y)} = \begin{cases}
     1 &\text{$v=x$, $u \neq y$}, \\
     0 &\text{otherwise.}
\end{cases}
$$  
\end{definition}

In particular, for a $d$-regular graph with $n$ vertices, the corresponding $B$ is of size $nd\times nd$.  A useful identity we will use in this paper is the following Ihara-Bass formula \cite{bass_iharaselberg_1992}.
\begin{lemma}[Ihara-Bass formula]\label{lem:Ihara-Bass} For any graph $G=(V,E)$, and any $z\in \mathbb C$, the following identity holds:
    \begin{align}\label{eq:Ihara}
 \det(B-zI)=(z^2-1)^{|E|-n} \det (z^2I -zA+D-I).       
    \end{align}
\end{lemma}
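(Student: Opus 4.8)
My plan is to derive the identity \eqref{eq:Ihara} from a single block-matrix determinant evaluated in two different ways, following the classical argument of Bass. Write $m=|E|$, so $B$ is $2m\times 2m$. First I would introduce the two $n\times 2m$ incidence matrices $S$ and $T$ recording the tail and the head of each oriented edge, $S_{v,e}=\mathbf 1\{v=\mathrm{tail}(e)\}$ and $T_{v,e}=\mathbf 1\{v=\mathrm{head}(e)\}$, together with the $2m\times 2m$ permutation matrix $P$ reversing orientation, $P_{e,f}=\mathbf 1\{f=e^{-1}\}$. A direct check from the definitions yields
\[
B=T^{\top}S-P,\qquad ST^{\top}=A,\qquad SS^{\top}=TT^{\top}=D,\qquad SP=T,\qquad P=P^{\top},\qquad P^{2}=I_{2m},
\]
and, since $P$ is a product of $m$ disjoint transpositions (pairing each oriented edge with its reversal), $\det(zI_{2m}+P)=(z^{2}-1)^{m}$ and $(zI_{2m}+P)^{-1}=(z^{2}-1)^{-1}(zI_{2m}-P)$ whenever $z^{2}\neq 1$.

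The heart of the argument is to consider the $(n+2m)\times(n+2m)$ matrix
\[
\mathcal M(z)=\begin{pmatrix} I_{n} & S \\ T^{\top} & zI_{2m}+P \end{pmatrix}
\]
and compute $\det\mathcal M(z)$ via the Schur complement in two ways. Taking the complement with respect to the top-left block $I_{n}$ (always invertible) gives $\det\mathcal M(z)=\det(zI_{2m}+P-T^{\top}S)=\det(zI_{2m}-B)=\det(B-zI)$, the last step because $2m$ is even. Taking instead the complement with respect to the bottom-right block, valid when $z^{2}\neq 1$, gives
\[
\det\mathcal M(z)=\det(zI_{2m}+P)\cdot\det\!\left(I_{n}-\tfrac{1}{z^{2}-1}S(zI_{2m}-P)T^{\top}\right)=(z^{2}-1)^{m}\det\!\left(I_{n}-\tfrac{zA-D}{z^{2}-1}\right),
\]
using $ST^{\top}=A$ and $SPT^{\top}=TT^{\top}=D$. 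Factoring $1/(z^{2}-1)$ out of the $n\times n$ determinant turns the right-hand side into $(z^{2}-1)^{m-n}\det(z^{2}I_{n}-zA+D-I_{n})$. Equating the two evaluations proves \eqref{eq:Ihara} for every $z$ with $z^{2}\neq 1$; since both sides are polynomials in $z$ in the regime $m\geq n$ (which covers all $d$-regular graphs with $d\geq 3$), the identity then extends to all $z\in\mathbb C$ by continuity.

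I do not expect a genuine obstacle here: the statement is a finite linear-algebra identity, and once the auxiliary matrix $\mathcal M(z)$ is written down the computation is mechanical. The only steps requiring care are the two bookkeeping tasks — guessing $\mathcal M(z)$ correctly, and verifying the combinatorial identities among $S$, $T$, $P$, $A$, $D$, $B$ with the orientation convention used in this paper — among which $B=T^{\top}S-P$ and $SPT^{\top}=D$ are the ones most worth double-checking.
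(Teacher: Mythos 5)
The paper states Lemma~\ref{lem:Ihara-Bass} as a known result and cites it to \cite{bass_iharaselberg_1992} without providing a proof, so there is no in-paper argument to compare against; your job was effectively to supply a proof from scratch.

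Your proof is correct, and it is essentially Bass's classical block-matrix/Schur-complement argument. I verified the auxiliary identities: with $e=(u,v)$, $f=(x,y)$, one has $(T^{\top}S)_{e,f}=\mathbf 1\{v=x\}$ and $P_{e,f}=\mathbf 1\{x=v,\,y=u\}$, so indeed $B=T^{\top}S-P$ exactly matches the paper's definition $B_{(u,v),(x,y)}=\mathbf 1\{v=x,\,u\neq y\}$; the identities $ST^{\top}=A$, $SS^{\top}=TT^{\top}=D$, $SP=T$ (hence $SPT^{\top}=TT^{\top}=D$), $P=P^{\top}$, $P^{2}=I$ are routine; $\det(zI+P)=(z^{2}-1)^{m}$ and the formula for $(zI+P)^{-1}$ follow since $P$ is an orthogonal involution splitting the space into $\pm 1$ eigenspaces of dimension $m$ each. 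The two Schur evaluations of $\det\mathcal M(z)$ and the factoring step are also correct, giving the stated identity for $z^{2}\neq 1$.

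Two small remarks. First, you do not actually need the hypothesis $m\geq n$ to extend to all $z$: the left-hand side $\det(B-zI)$ is a polynomial, hence continuous and bounded near $z=\pm 1$, so agreement with the right-hand side on $\mathbb C\setminus\{\pm1\}$ forces the right-hand side to have removable singularities there (equivalently, $(z^{2}-1)^{\,n-m}\mid\det(z^{2}I-zA+D-I)$ when $m<n$) and the identity holds everywhere; your sufficient condition $m\geq n$ is fine but slightly weaker than what the argument yields, and the lemma as stated is ``for any graph.'' Second, a cosmetic point: since $(B-zI)$ is $2m\times 2m$ with $2m$ even, $\det(zI-B)=\det(B-zI)$, as you noted — worth keeping that line since a sign error there is a common slip.
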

Define a block matrix
\begin{align}\label{eq:tildeB}
    \tilde B=\begin{bmatrix}
0 &  D-I\\
-I  & A
\end{bmatrix} \in \mathbb R^{2n\times 2n}.
\end{align}
Then from \eqref{eq:Ihara}, we have 
\begin{align}\label{eq:det_tildB}
     \det(B-zI)=(z^2-1)^{|E|-n}\det (\tilde B-zI).
\end{align}
The identity \eqref{eq:det_tildB} implies that $B$ and $\tilde B$ share the same spectrum, up to the multiplicity of trivial eigenvalues $\pm 1$. We also call $\tilde B$ the \textit{reduced non-backtracking matrix} of $G$.  When $G$ is a $d$-regular graph with $n$ vertices, \eqref{eq:Ihara} can be further simplified to
 \begin{align}\label{eq:Ihara_regular}
      \det(B-zI)=(z^2-1)^{nd/2-n} \det (z^2I -zA+(d-1) I).       
 \end{align}

\subsection{Regular hypergraphs}
In this section, we include some standard definitions of hypergraphs.
\begin{definition}[Hypergraph]
    A \textit{hypergraph} $H$ consists of a set $V$ of vertices and a set $E$ of hyperedges such that each hyperedge is a nonempty set of $V$.  A hypergraph $H$ is \textit{$k$-uniform} for an integer $k\geq 2$ if every hyperedge $e\in E$ contains exactly $k$ vertices. The \textit{degree} of $i$, denoted   $\deg(i)$, is the number of all hyperedges incident to $i$. 
\end{definition}
	 A hypergraph is \textit{$d$-regular} if all of its vertices have degree $d$.
A hypergraph is \textit{$(d,k)$-regular} if it is both $d$-regular and $k$-uniform. See Figure~\ref{fig:hypergraph} for an example.
\begin{figure}
    \centering
\includegraphics[width=0.25\linewidth]{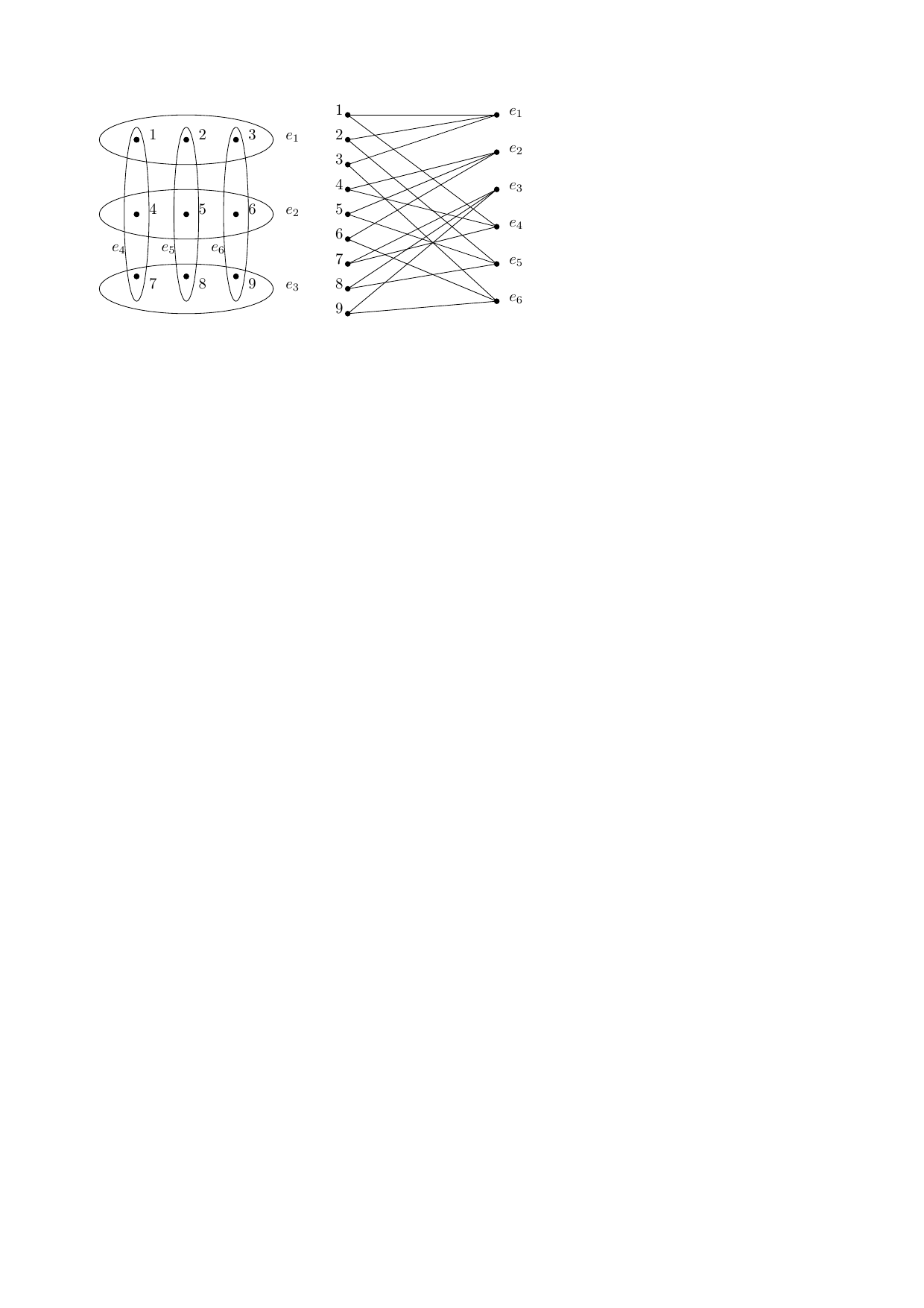}
    \caption{A $(2,3)$-regular hypergraph with $9$ vertices}
    \label{fig:hypergraph}
\end{figure}

\begin{definition}[Adjacency matrix of a hypergraph]
  For a hypergraph $H$ with $n$ vertices, we associate a $n\times n$ symmetric matrix $A$ called the \textit{adjacency matrix} of $H$. For $i\not=j$, we define $A_{ij}$ as the number of hyperedges containing both $i$ and $j$; we define $A_{ii}=0$ for all $1\leq i\leq n$. When the hypergraph is $2$-uniform, this is the definition for the adjacency matrix of a graph.  
\end{definition}

\section{Main results}\label{sec:main}
\subsection{Random regular graphs}\label{sec:regular}
   Let $\mu_1,\dots,\mu_{2n}$ be the eigenvalues of $\tilde B$. Define $x_i$ to be the real part of $\mu_i$. The empirical measure of $\{x_i, 1\leq i\leq 2n\}$ is denoted as 
\[ \mu=\frac{1}{2n} \sum_{i=1}^{2n} \delta_{x_i}.\]

The following theorem characterizes the eigenvalue distribution of $\tilde B$ after projecting all eigenvalues to the real line.
\begin{theorem}[Projecting eigenvalues of $\tilde B$ to the real line]\label{thm:real}
The following holds for the reduced non-backtracking matrix $\tilde B$ of a uniformly chosen random $d$-regular graph:
\begin{enumerate}
    \item When $d\geq 3$ is a fixed integer, $\mu$ converges weakly in probability to a rescaled Kesten-McKay distribution $\mu_{\mathrm{KM}}$ supported on $[-\sqrt{d-1},\sqrt{d-1}]$, where 
    \[ \mu_{\mathrm{KM}}(x)= \frac{2d\sqrt{(d-1)-x^2} }{\pi(d^2 - 4x^2)} ~ \mathbf{1} \left\{ |x|\leq \sqrt{d-1}\right\}.\] 
    \item When $d\to\infty$ as $n\to\infty$, the empirical measure of $\left\{\frac{2x_i}{\sqrt{d-1}}, i\in [2n]\right\}$ converges weakly in probability to a semicircle law $\mu_{\mathrm{SC}}$, where 
    \begin{align}\label{eq:semicircle}
    \mu_{\mathrm{SC}}(x)=\frac{1}{2\pi}\sqrt{4-x^2}~ \mathbf{1}\{ |x|\leq 2\}.
    \end{align}
\end{enumerate}
\end{theorem}

Figure \ref{fig:eigenvalues_projection} is a simulation for the projected eigenvalues on the real line of a random regular graph. 
Similar results have been established for Erd\H{o}s-R\'{e}nyi graphs $G(n,p)$ with $np=\omega(\log n)$ \cite{wang2017limiting}  and for stochastic block models \cite{coste2021eigenvalues} in the same regime.
\begin{figure} 
    \centering
    \includegraphics[width=\linewidth,height=0.22\textheight]{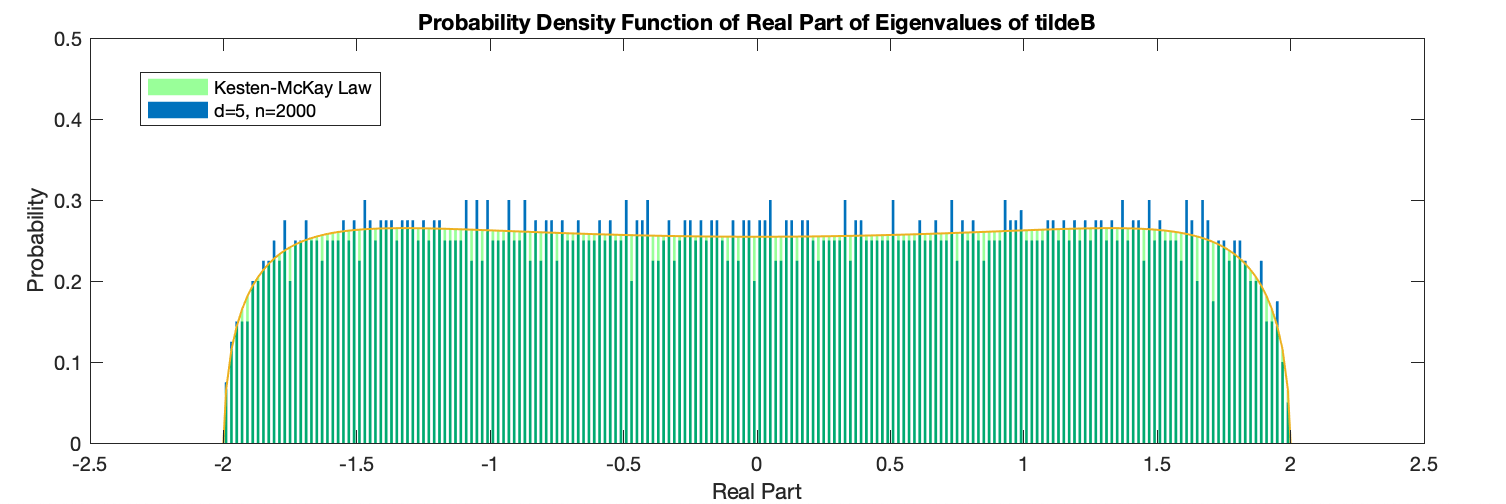}
    \vspace{-15pt}
    \caption{Simulation for the empirical measure of the eigenvalue real parts of $\tilde B$ for a random $5$-regular graph with $2000$ vertices, excluding the deterministic eigenvalue $\mu_1=4$.}
    \label{fig:eigenvalues_projection}
\end{figure}

We now move on to study the eigenvectors of $B$ and $\tilde B$.
Note that each unit eigenvector of $\tilde B$ is in $\mathbb C^{2n}$. We show they are completely delocalized based on the eigenvector delocalization results in \cite{huang2021spectrum} for the adjacency matrix of a random $d$-regular graph.
\begin{theorem}[Eigenvector delocalization for $\tilde B$] \label{thm:tildeB}
Let $d\geq 3$ be fixed, and $\tilde B$ be the reduced non-backtracking matrix of a random $d$-regular graph. Let $u_i,  i\in [2n]$ be the $\ell_2$-normalized eigenvector associated with $\mu_i$. Then there exist absolute  constants $C_1, C_2>0$ such that with probability at least $1-n^{-C_1}$, for all $i\in [2n]$,
\begin{align}
    \| u_i\|_{\infty}\leq  \frac{\log^{C_2}(n)}{\sqrt n}.
\end{align}
\end{theorem}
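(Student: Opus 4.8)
The plan is to express each eigenvector of $\tilde B$ in terms of an eigenvector of the adjacency matrix $A$ via the quadratic pencil appearing in the Ihara-Bass formula, and then transfer the known delocalization bound for $A$. Suppose $\mu$ is an eigenvalue of $\tilde B$ with unit eigenvector $u = (u^{(1)}, u^{(2)})^\top \in \mathbb C^{2n}$, written in $n$-dimensional blocks. The eigenvalue equation $\tilde B u = \mu u$ with the block form \eqref{eq:tildeB} reads $(d-1) u^{(2)} = \mu u^{(1)}$ and $-u^{(1)} + A u^{(2)} = \mu u^{(2)}$. Substituting the first relation into the second shows that whenever $\mu \neq 0$, the vector $w := u^{(2)}$ satisfies $(\mu^2 I - \mu A + (d-1) I) w = 0$, i.e. $A w = \frac{\mu^2 + d - 1}{\mu} w$; and then $u^{(1)} = \frac{d-1}{\mu} w$ is determined. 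So each nontrivial eigenpair of $\tilde B$ comes from an eigenpair $(\lambda, w)$ of $A$ with $\lambda = \frac{\mu^2+d-1}{\mu}$, and conversely each eigenvalue $\lambda$ of $A$ with $|\lambda| \le 2\sqrt{d-1}$ (other than $\pm d$) gives rise to two such $\mu$. The case $\mu = 0$ forces $(d-1)u^{(2)} = 0$ hence $u^{(2)} = 0$ and then $u^{(1)} = A u^{(2)} = 0$ from the second equation unless... actually $\mu = 0$ gives $u^{(1)} = A u^{(2)}$ and $u^{(2)} = 0$, so $u = 0$; thus $\tilde B$ is invertible and $\mu = 0$ does not occur.

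Next I would normalize. With $u^{(1)} = \frac{d-1}{\mu} w$ and $u^{(2)} = w$, the unit-norm condition is $\|u\|_2^2 = \left(1 + \frac{(d-1)^2}{|\mu|^2}\right)\|w\|_2^2 = 1$, so $\|w\|_2^2 = \left(1 + \frac{(d-1)^2}{|\mu|^2}\right)^{-1}$ and in particular $\|w\|_2 \le 1$. Hence for every coordinate,
\begin{align}
\|u\|_\infty \le \max\left\{ \frac{d-1}{|\mu|}, 1 \right\} \|w\|_\infty \le \max\left\{ \frac{d-1}{|\mu|}, 1 \right\} \frac{\|\hat w\|_\infty}{\|\hat w\|_2},
\end{align}
where $\hat w = w/\|w\|_2$ is an $\ell_2$-normalized eigenvector of $A$. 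The delocalization result of \cite{huang2021spectrum} gives, with probability at least $1 - n^{-C_1}$, that every $\ell_2$-normalized eigenvector $\hat w$ of $A$ satisfies $\|\hat w\|_\infty \le \log^{C_2}(n)/\sqrt n$. So the only remaining point is a lower bound on $|\mu|$ bounded away from $0$: since $\mu$ is a root of $\mu^2 - \lambda \mu + (d-1) = 0$, the product of the two roots is $d - 1$, so $|\mu| \cdot |\mu'| = d-1$; combined with $|\mu|, |\mu'| \le \sqrt{d-1}$ (these are the nontrivial eigenvalues of $B$, which all have modulus $\le \sqrt{d-1}$) we get $|\mu| \ge \sqrt{d-1}$ for each nontrivial root — wait, more carefully: $|\mu||\mu'| = d-1$ and $\max(|\mu|,|\mu'|)\le \sqrt{d-1}$ forces $|\mu| = |\mu'| = \sqrt{d-1}$ when both roots are nontrivial, and if $\lambda$ is real in $[-2\sqrt{d-1}, 2\sqrt{d-1}]$ the two roots are complex conjugates of modulus exactly $\sqrt{d-1}$. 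For the trivial adjacency eigenvalue $\lambda = \pm d$ the roots of $\mu^2 \mp d\mu + (d-1)$ are $\{1, d-1\}$ or $\{-1, -(d-1)\}$, giving the trivial $\tilde B$-eigenvalues $\pm 1$ together with $\pm(d-1)$; in all cases $|\mu| \ge 1$. Therefore $\max\{(d-1)/|\mu|, 1\} \le \max\{\sqrt{d-1}, 1\} = \sqrt{d-1}$ for nontrivial $\mu$, which is an absolute constant since $d$ is fixed, and the claimed bound $\|u_i\|_\infty \le \log^{C_2}(n)/\sqrt n$ follows after absorbing the constant $\sqrt{d-1}$ into $C_2$ (or adjusting constants).

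I expect the main obstacle to be the bookkeeping around degenerate cases: an eigenvalue $\lambda$ of $A$ may have multiplicity, the two roots $\mu, \mu'$ could coincide (when $\lambda = \pm 2\sqrt{d-1}$), and one must make sure the correspondence between eigenvectors of $\tilde B$ and eigenvectors of $A$ is set up so that \emph{every} $\ell_2$-normalized eigenvector of $\tilde B$ — including any basis chosen within a degenerate eigenspace — is captured, rather than just one representative. This is handled by observing that the map $w \mapsto (\frac{d-1}{\mu} w, w)$ is linear and injective on the $\lambda$-eigenspace of $A$ for each fixed root $\mu$, and the two roots give the full $\mu$- and $\mu'$-eigenspaces of $\tilde B$ (a dimension count using \eqref{eq:det_tildB} confirms the multiplicities match); hence an arbitrary unit eigenvector of $\tilde B$ for eigenvalue $\mu$ pulls back to a (not necessarily unit, but norm $\le 1$) eigenvector $w$ of $A$ for the corresponding $\lambda$, and the coordinatewise bound above applies verbatim. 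The rest is quantitative control of $|\mu|$, which as shown is elementary from Vieta's formulas together with the known localization of the nontrivial non-backtracking spectrum in the disk of radius $\sqrt{d-1}$ (equivalently, from $|\lambda| \le 2\sqrt{d-1}$, i.e. the bulk of the Kesten–McKay law, which also follows from the spectral gap input of \cite{huang2021spectrum,bordenave2020new} only for the top eigenvalue but here we just need the trivial bound on all bulk eigenvalues of $A$).
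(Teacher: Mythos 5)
Your proposal takes essentially the same route as the paper: derive the block eigenvector structure of $\tilde B$ from the pencil $\mu^2 I - \mu A + (d-1)I$, pull back to an eigenvector of $A$, invoke the Huang--Yau delocalization bound, and control the prefactor $\max\{(d-1)/|\mu|,1\}$ (equivalently $\max\{1,|\mu|/(d-1)\}$ in the paper's scaling) by a lower bound on $|\mu|$.

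One worthwhile difference, and one small slip. You observe in passing that the lower bound on $|\mu|$ is deterministic: from Vieta $\mu\mu' = d-1$ and the trivial bound $|\lambda|\le d$, the larger root satisfies $|\mu'|\le d$, so the smaller root satisfies $|\mu|\ge (d-1)/d$, which is a constant independent of any spectral gap input. This is in fact cleaner than the paper's Case~1(2), which invokes the Bordenave spectral gap bound \eqref{eq:spectralgap} to control $|\mu_i|$ even though, as you notice, it is not strictly needed there. The small slip is your explicit claim that $\max\{(d-1)/|\mu|,1\}\le\sqrt{d-1}$ for all nontrivial $\mu$: this is only true when $|\mu|=\sqrt{d-1}$, i.e.\ when $|\lambda|\le 2\sqrt{d-1}$. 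If $\lambda$ is a nontrivial real eigenvalue with $2\sqrt{d-1}<|\lambda|<d$ (an event of small but not zero probability without appealing to Friedman/Bordenave), the smaller root has $|\mu|<\sqrt{d-1}$ and $(d-1)/|\mu|>\sqrt{d-1}$; the correct deterministic bound is $(d-1)/|\mu|\le d$, which still gives an absolute constant for fixed $d$. So either keep the spectral gap input (as the paper does), or replace $\sqrt{d-1}$ by $d$ and the argument goes through without it; as written, the constant is wrong but the conclusion is unaffected. Your explicit attention to the $\mu=0$ case and to the dimension count across degenerate eigenspaces is a nice touch the paper leaves implicit.
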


By a different argument with the help of the Ihara-Bass formula, we can also show that unit eigenvectors of $B$ as a vector in $\mathbb C^{nd}$ are completely delocalized.

\begin{theorem}[Eigenvector delocalization for $B$]\label{thm:B}
Let $d\geq 3$ be fixed, and  $B$ be the non-backtracking matrix of a random $d$-regular graph. Let $\mu_i, i\in [2n]$ be the eigenvalues of $\tilde B$. Then  there exist absolute  constants $C_1, C_2>0$ such that with probability at least $1-n^{-C_1}$, all the  unit eigenvectors $w_i$ of $B$ associated with  $\mu_i\not=\pm 1$  satisfy
\begin{align}
    \| w_i\|_{\infty}\leq  \frac{\log^{C_2}(n)}{\sqrt {nd}}.
    \end{align}
\end{theorem}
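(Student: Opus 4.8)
The plan is to transport the spectral information of the adjacency matrix $A$ to $B$ through the algebraic lifting underlying the Ihara--Bass formula \eqref{eq:Ihara_regular}, and then combine it with the eigenvalue localization of \cite{bordenave2020new} and the eigenvector delocalization of \cite{huang2021spectrum} for random $d$-regular graphs. Throughout I would condition on the high-probability event $\mathcal E$ on which: (i) $G$ is connected and non-bipartite; (ii) every non-Perron eigenvalue $\lambda$ of $A$ satisfies $|\lambda|\le 2\sqrt{d-1}+o(1)$; and (iii) every $\ell_2$-normalized eigenvector $g$ of $A$ satisfies $\|g\|_\infty\le\log^{C}(n)/\sqrt n$ for an absolute constant $C$. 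By \cite{bordenave2020new,huang2021spectrum}, $\mathbb P(\mathcal E)\ge 1-n^{-C_1}$.

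First I would set up the lifting. Fix $\mu\notin\{0,1,-1\}$ and put $\lambda:=\mu+(d-1)/\mu$, so $\mu^2-\lambda\mu+(d-1)=0$. If $Ah=\lambda h$, then $w^{(\mu,h)}\in\mathbb C^{\vec E}$ defined by $w^{(\mu,h)}_{(u,v)}:=\mu h_v-h_u$ is an eigenvector of $B$ with eigenvalue $\mu$: summing $w^{(\mu,h)}$ over the $d-1$ edges out of $v$ other than $(v,u)$, using $\sum_{x\sim v}h_x=\lambda h_v$ and $\mu\lambda-(d-1)=\mu^2$, returns $\mu\,w^{(\mu,h)}_{(u,v)}$. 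Conversely, if $Bw=\mu w$ with $\mu\ne\pm1$, set $h_v:=\sum_{x\sim v}w_{(v,x)}$; applying the eigenvalue equation to $(u,v)$ and to $(v,u)$ gives $w_{(v,u)}=h_v-\mu w_{(u,v)}$ and $w_{(u,v)}=h_u-\mu w_{(v,u)}$, hence $(1-\mu^2)w_{(u,v)}=h_u-\mu h_v$, i.e.\ $w=(\mu^2-1)^{-1}w^{(\mu,h)}$, and substituting back forces $Ah=\lambda h$ (and $h\ne 0$ whenever $w\ne 0$). On $\mathcal E$, $\tilde B$ has no eigenvalue $-1$ (which would force $G$ bipartite), so each eigenvalue $\mu_i\ne1$ of $\tilde B$ is either the Perron value $d-1$ (with $\lambda_i=d$, $h_i\propto\mathbf 1$), or has $\mu_i\ne\pm1$ with $\lambda_i=\mu_i+(d-1)/\mu_i$ a non-Perron eigenvalue of $A$; in the latter case every unit eigenvector of $B$ at $\mu_i$ is a scalar multiple of $w^{(\mu_i,h)}$ for some $\ell_2$-unit $h\in\mathrm{Eig}_A(\lambda_i)$, and (ii) gives $|\mu_i|\le\sqrt{d-1}+o(1)$ (immediate if $|\mu_i|^2=d-1$, and a consequence of $|\lambda_i|\le 2\sqrt{d-1}+o(1)$ if $\mu_i$ is real).

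Next I would compute the relevant norms. For $\mu$ as above and a real unit eigenvector $h$ of $A$, the identities $\sum_{(u,v)\in\vec E}h_v^2=\sum_{(u,v)\in\vec E}h_u^2=d$ and $\sum_{(u,v)\in\vec E}h_uh_v=\langle h,Ah\rangle=\lambda$ give $\|w^{(\mu,h)}\|_2^2=d(|\mu|^2+1)-2\lambda\,\mathrm{Re}\,\mu$, which after inserting $\lambda=\mu+(d-1)/\mu$ simplifies to $d^2-4(\mathrm{Re}\,\mu)^2$ when $|\mu|^2=d-1$ and to $(d-2)(\mu^2-1)$ when $\mu\in\mathbb R$; in both cases (ii) yields $\|w^{(\mu,h)}\|_2\ge d-2-o(1)\ge d/6$ for $n$ large. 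Since also $\|w^{(\mu,h)}\|_\infty\le(|\mu|+1)\|h\|_\infty\le 3\sqrt d\,\log^{C}(n)/\sqrt n$, every unit eigenvector $w_i$ of $B$ with $\mu_i\ne1$ obeys $\|w_i\|_\infty=\|w^{(\mu_i,h_i)}\|_\infty/\|w^{(\mu_i,h_i)}\|_2\le 18\,\log^{C}(n)/\sqrt{nd}$, while the Perron vector is constant with $\|w_i\|_\infty=1/\sqrt{nd}$; taking $C_2=C+1$ absorbs the constant.

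The hard part will be the converse direction of the lifting: one must be sure that \emph{every} eigenvector of $B$ at $\mu_i\ne1$ — including those attached to degenerate eigenvalues of $A$ and those possibly sitting in Jordan blocks of $B$ at $\pm\sqrt{d-1}$ — is of the stated form, which is precisely what the explicit identity $w=(\mu^2-1)^{-1}w^{(\mu,h)}$ with $h_v=\sum_{x\sim v}w_{(v,x)}$ guarantees. Once that is in place the rest is bookkeeping: the $d$-dependence of the normalization $\|w^{(\mu,h)}\|_2$ is governed entirely by the edge-level estimate $|\mu_i|\le\sqrt{d-1}+o(1)$, i.e.\ by Friedman's theorem as in \cite{bordenave2020new}, and the numerator is controlled by the adjacency-matrix delocalization of \cite{huang2021spectrum}. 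One could alternatively route the argument through $\tilde B$ and Theorem \ref{thm:tildeB}, since any eigenvector $(f,g)^{\top}$ of $\tilde B$ with eigenvalue $\mu\ne 0$ has $f=\tfrac{d-1}{\mu}g$ and $g\in\mathrm{Eig}_A(\lambda)$, but the direct route above is cleaner.
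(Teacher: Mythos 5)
Your proposal follows the same architecture as the paper's proof: lift each adjacency eigenvector $h$ of $A$ to the non-backtracking eigenvector $w^{(\mu,h)}_{(u,v)}=\mu h_v-h_u$ (the paper's Lemma 6.1, quoted from Lubetzky--Peres), control $\|w^{(\mu,h)}\|_\infty$ via the adjacency delocalization of Huang--Yau, lower-bound $\|w^{(\mu,h)}\|_2$, and control $|\mu|$ via the Bordenave spectral-gap bound, treating real and complex $\mu$ separately. The computations you give are consistent with the paper's estimates (in fact slightly sharper: you compute $\|w^{(\mu,h)}\|_2^2$ exactly and get $d^2-4(\mathrm{Re}\,\mu)^2$ or $(d-2)(\mu^2-1)$, whereas the paper uses Cauchy--Schwarz to get only $d(|\mu|-1)^2$).

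The one genuinely substantive addition in your write-up is the converse direction of the lifting: you show that for $\mu\neq\pm 1$, \emph{every} $w$ with $Bw=\mu w$ is proportional to $w^{(\mu,h)}$ for $h_v=\sum_{x\sim v}w_{(v,x)}$, and that this $h$ is automatically an $A$-eigenvector at $\lambda=\mu+(d-1)/\mu$. The theorem is stated for ``all unit eigenvectors of $B$ associated with $\mu_i\neq 1$,'' and the paper constructs one family of $2n$ eigenvectors and bounds those; without your converse (or an a priori simplicity/nondegeneracy argument), one cannot conclude that these exhaust the relevant eigenspaces when $\tilde B$ has degenerate eigenvalues or when, a priori, $B$ might have generalized eigenvectors there. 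Your explicit identity $w=(\mu^2-1)^{-1}w^{(\mu,h)}$ also shows $B$ is diagonalizable on the $\mu\neq\pm1$ spectrum, which quietly rules out Jordan blocks. You also correctly flag that the case $\mu=-1$ must be discarded (you do so by conditioning on non-bipartiteness, which holds w.h.p.); the paper handles this only implicitly. So: same route, but your version closes a gap that the paper's proof skates over.
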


\begin{figure} 
    \centering
    \includegraphics[width=\linewidth]{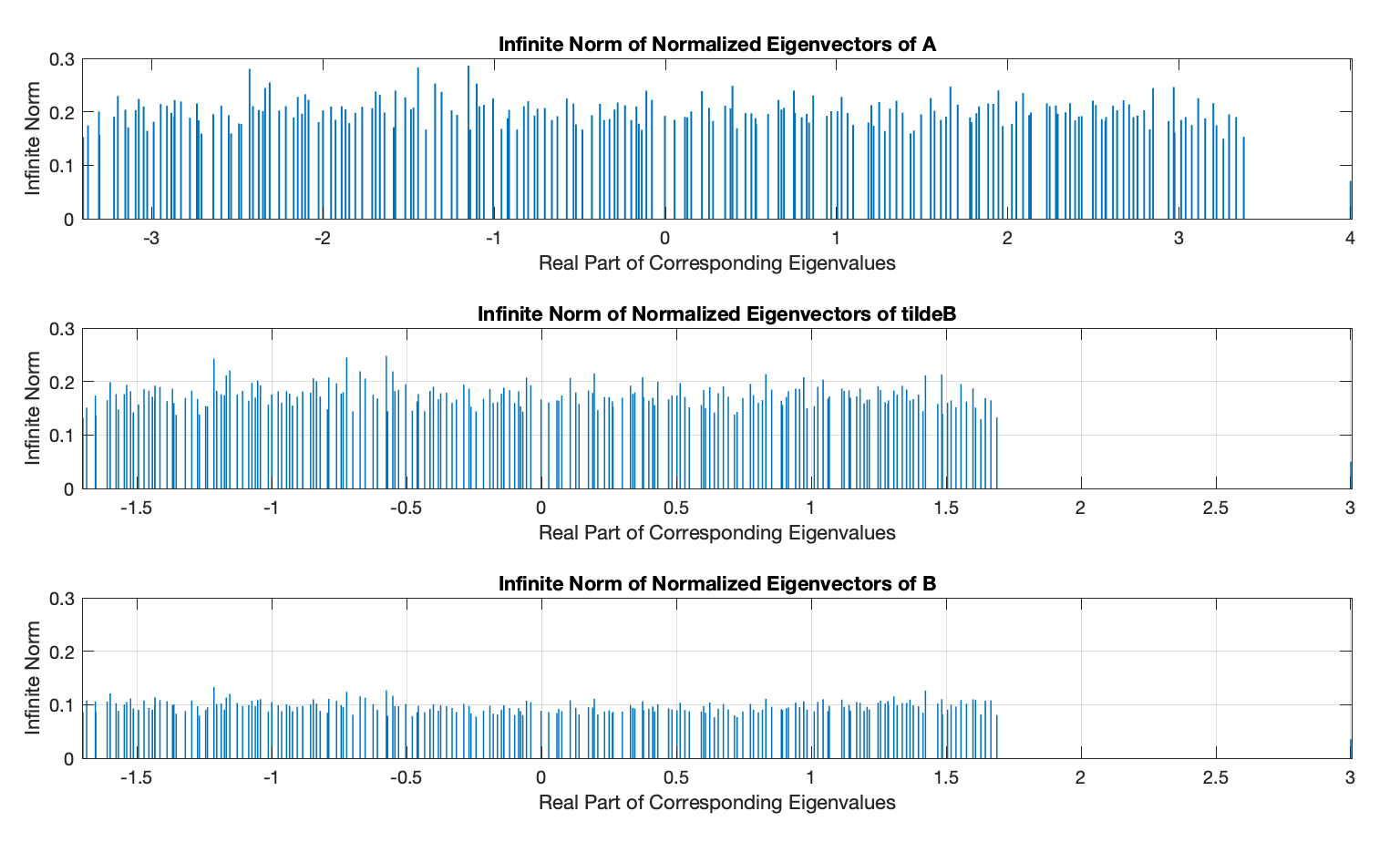}
    \vspace{-10pt}
    \caption{The infinite norms of  eigenvectors for $A$, $\tilde B$, $B$ of a $4$-regular random graph with $n=200$. }
    \label{fig:delocalization}
\end{figure}

The $\ell_{\infty}$-norms of eigenvectors of $B$ and $\tilde B$ are illustrated in Figure \ref{fig:delocalization}. Similar results for non-backtracking eigenvectors of random $d$-regular graph for growing $d$ can be proved in the same way by using the eigenvector delocalization bounds for the adjacency matrix in \cite{dumitriu2012sparse,tran2013sparse,bauerschmidt2017local,bauerschmidt2019local}.

\begin{remark}[Eigenvectors  corresponding to  eigenvalues $\pm 1$ in Equation \eqref{eq:Ihara_regular}]
In Equation \eqref{eq:Ihara_regular}, we see there are trivial eigenvalues $\pm 1$ of $B$  with an extra multiplicity $\frac{nd}{2}-n$ which are not given by eigenvalues of $\tilde B$. The structure of the eigenspaces of $\pm 1$ was discussed in \cite[Proof of Proposition 3.1]{lubetzky2016cutoff}. 
\end{remark}

\subsection{Random regular hypergraphs}

\begin{definition}[Non-backtracking operator of a  hypergraph]
   For a hypergraph $H=(V,E)$, its \textit{non-backtracking operator} $B$ is a square matrix indexed by oriented hyperedges 
\[\vec{E}=\{(i,e): i\in V, e\in E, i\in e\}\] with entries given by 
$$B_{(i, e), (j,f)}=\begin{cases}
	1 & \text{if $j\in e\setminus\{i\}, f\not=e$,}\\
	0 & \text{otherwise,}
\end{cases}
$$
for any oriented hyperedges $(i, e),(j, f)\in \vec{E}$.
\end{definition} 
 This is a generalization of the graph non-backtracking operators to hypergraphs. 
For a $k$-uniform hypergraph $H=(V,E)$, define the $2n \times 2n$ \textit{reduced non-backtracking matrix} $\tilde B$ as
\begin{equation}\label{eq:def_tilde_B}
\tilde B = \begin{pmatrix} 0 &  (D-I)\\  -(k-1)I & A-(k-2)I
\end{pmatrix},
\end{equation}
where $A$ is the adjacency matrix of $H$, and $D$ is the diagonal \textit{degree matrix} with
\begin{align} \label{eq:def_degree_matrix}
D_{ii} = \#\{e\in E:i\in e \}.
\end{align}

The following Ihara-Bass formula for hypergraphs was proved in \cite{stephan2022sparse}. Lemma~\ref{lem:Ihara-Bass_H} shows that the spectrum of $\tilde B$ is identical to that of $B$, except for possible trivial eigenvalues at $-1$ and $-(k-1)$. 
\begin{lemma}[Lemma 1 in \cite{stephan2022sparse}]\label{lem:Ihara-Bass_H}
Let $H=(V,E)$ be a $k$-uniform hypergraph. The following identity holds for any $z\in \mathbb C$:
\begin{align*}
    \det(B-zI)&=(z-1)^{(k-1)|E|-n}(z+(k-1))^{|E|-n}\det \left( z^2+(k-2)z-zA+(k-1)(D-I)\right)\\
    &=(z-1)^{(k-1)|E|-n}(z+(k-1))^{|E|-n}\det(\tilde B-zI).
\end{align*}
\end{lemma}

We obtain the following limiting distributions for the real part of eigenvalues in $\tilde B$ for a uniformly chosen random $(d,k)$-regular hypergraph.

\begin{theorem} [Projecting eigenvalues of $\tilde B$ to the real line]\label{thm:bi_real}  Let $\tilde B$ be the reduced non-backtracking operator of a random  $(d,k)$-regular hypergraph. Let $x_1,\dots,x_{2n}$ be the real part of the eigenvalues of $\tilde B$. Define $\mu$ to be the empirical measure of $\left\{ \frac{2x_i-(k-2)}{\sqrt{(d-1)(k-1)}}\right\}_{i=1}^{2n}$. Then the following holds:
\begin{enumerate}
    \item If $d,k$ are fixed, as $n\to\infty$, $\mu$ converges weakly in probability to a distribution supported on $[-2,2]$ whose density function is given by
   \begin{align}
	\mu_{d,k}(x)=\frac{1+\frac{k-1}{q}}{(1+\frac{1}{q}-\frac{x}{\sqrt{q}})(1+\frac{(k-1)^2}{q}+\frac{(k-1)x}{\sqrt{q}})\pi}\sqrt{1-\frac{x^2}{4}},
	\end{align}
	where $q=(k-1)(d-1)$.
    \item If $d/k\to \alpha$ as $n\to\infty$ and $d\leq \frac{n}{32}$, $\mu$ converges weakly in probability to a distribution supported on $[-2,2]$ with density function 
    \begin{align}
        \mu_{\alpha}(x)=\frac{\alpha}{(1+\alpha+\sqrt{\alpha}x)\pi}\sqrt{1-\frac{x^2}{4}}.
    \end{align}
    \item If $d\to\infty, d=o(n^{\varepsilon})$ for any $\varepsilon>0$ and $\frac{d}{k}\to\infty$, $\mu$ converges weakly in probability to the semicircle law given in \eqref{eq:semicircle}.
\end{enumerate}
\end{theorem}

We obtain the following $\ell_{\infty}$-norm bounds for eigenvectors of $\tilde B$ and $B$. 

\begin{theorem}[$\ell_{\infty}$-norm bound for eigenvectors of $\tilde B$ and $B$]\label{thm:bi_tildeB}
    Let $B$ and $\tilde B$ be the non-backtracking operator and reduced non-backtracking operator of a $(d,k)$-regular hypergraph, respectively. Let $d,k$ be fixed. Then the following holds:
 \begin{enumerate}
     \item Let $v_i$ be an unit  eigenvector of $A$ associated with the eigenvalue $\lambda_i$. Let $u_i,u_i'$ be unit eigenvectors of $\tilde B$ associated with two eigenvalues $\mu_i, \mu_i'$, which are given by the solutions of  
     \begin{align}\label{eq:quadtatic_mu}
     \mu^2 - (\lambda_i-k+2 )\mu + (d-1)(k-1) = 0.
     \end{align} Then  for all $i\in [n]$,
   \[  \|u_i\|_{\infty}, \|u_i'\|_{\infty}\leq \|v_i\|_{\infty}.\]
     \item For a random $(d,k)$-regular graph, let $w_i,w_i'$ be unit eigenvectors of $B$ associated with two eigenvalues $\mu_i, \mu_i'$ given by \eqref{eq:quadtatic_mu} with $\mu_i, \mu_i'\not\in \{1,-(k-1)\}$. Then, for $d> k \geq 3$, with high probability, for all $i\in [n]$, 
     \begin{align}
         \|w_i\|_{\infty}, \|w_i'\|_{\infty} \leq  \frac{  \sqrt{k-1} +o(1)}{  \sqrt{d-1 } - \sqrt{k-1}  } \|v_i\|_{\infty}.
     \end{align}
 \end{enumerate}
\end{theorem}

Theorem~\ref{thm:bi_tildeB} shows that eigenvector delocalization of $A$ implies eigenvector delocalization for $B$ and $\tilde B$. However, no eigenvector delocalization results are available in the literature for the adjacency matrix random regular hypergraphs. It's possible that by adapting the result of eigenvector delocalization for bipartite biregular graphs \cite[Corollary 2.8]{yang2017bulk}  together with the connection between random regular hypergraphs and random bipartite biregular graphs established in \cite{dumitriu2021spectra}, one can obtain an eigenvector delocalization bound for the adjacency matrix in the random regular hypergraph case.

\subsection{Community detection in the RSBM}
\begin{definition}[Regular stochastic block model (RSBM)]\label{def:RSBM}
  For an even integer $n$  and two integers $d_1$ and $d_2$, the \textit{regular stochastic block model} with
vertex set $[n]$ is obtained as follows. Choose a partition  $V_1,V_2$ of equal size $n/2$ of the
vertex set $[n]$, uniformly from among the set of all such partitions. Choose two independent copies of
uniform  $d_1$-regular graphs with vertex set $V_1$, respectively $V_2$. Finally, connect the vertices
from $V_1$ with those from $V_2$ by a uniformly random $d_2$-bipartite regular graph.  
\end{definition}

Let $\sigma\in \{-1,1\}^{n}$ be a vector for the community assignment defined by 
\begin{align}
    \sigma_i=\begin{cases}
        1 & i\in V_1\\
        -1 & i\in V_2.
    \end{cases}
\end{align}
The \textit{community detection} problem for the regular stochastic block model is to observe a random graph sampled from the model and construct an estimator for the community assignment vector $\sigma$.

Note that a graph sampled from this model is a random  $(d_1+d_2-1)$-regular graph but not uniformly distributed. \cite[Proposition 1]{brito2015recovery} shows that the RSBM
is asymptotically distinguishable from a uniformly chosen random regular graph with the same degree.  However, some of our results for deterministic regular graphs can still be applied to this model.  We describe the real eigenvalues and eigenvectors of $\tilde B$ as follows:

\begin{theorem}[Real eigenvalues and eigenvectors of $\tilde B$]\label{thm:RSBM}
    Let $\tilde B$ be the reduced non-backtracking operator of an $(n,d_1,d_2)$-regular stochastic block model. Then,  the following holds:
    \begin{enumerate}
        \item $d_1+d_2-1$ is an eigenvalue of $\tilde B$ with the corresponding eigenvector $v_1=\left[ 1,\dots,1\right]^\top\in \mathbb R^{2n}$ and $1$ is an eigenvalue of $\tilde B$ with the corresponding eigenvector \[v_1'=\left[1,\dots, 1, \frac{1}{d_1+d_2-1},\dots, \frac{1}{d_1+d_2-1}\right]^\top\in \mathbb R^{2n}.\]
        \item When $(d_1-d_2)^2>4(d_1+d_2-1)$,  there are two  real eigenvalues of $\tilde B$ given by 
        \begin{align}\label{eq:mu2formula}
            \mu_2,\mu_2'=\frac{d_1-d_2\pm\sqrt{(d_1-d_2)^2-4(d_1+d_2-1)}}{2},
        \end{align}
        with the corresponding eigenvector
        \begin{align}
            v_2=\begin{bmatrix}
\sigma\\
 \frac{\mu_2}{d_1+d_2-1} \sigma 
\end{bmatrix} , \quad v_2'=\begin{bmatrix}
\sigma\\
 \frac{\mu_2'}{d_1+d_2-1} \sigma
\end{bmatrix} .
        \end{align}
    \item When $(d_1-d_2)^2>4(d_1+d_2-1)$ and $d_1$ is even, with high probability, $\{d_1+d_2-1, 1, \mu_2,\mu_2'\}$ are 4 real eigenvalues of multiplicity one, and the rest of the eigenvalues of $\tilde B$ are within $o(1)$ distance from the circle of radius $\sqrt{d_1+d_2-1}$.
    \end{enumerate}
\end{theorem}

\begin{figure} 
    \centering
    \includegraphics[width=\linewidth]{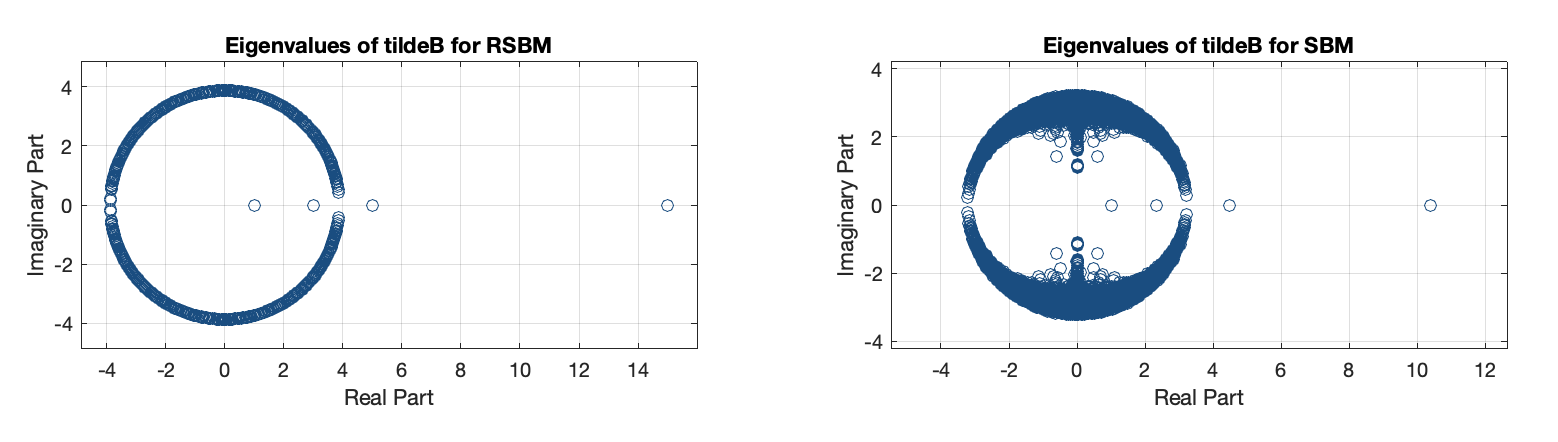}
    \vspace{-20pt}
    \caption{On the left is the non-backtracking spectrum of an RSBM, where we choose $d_1 = 12$ and $d_2 = 4$ with size 4000. There is an eigenvalue $\mu_2'$ inside the circle matched with \eqref{eq:mu2formula} given by $\frac{\mu_1}{\mu_2}$.   On the right is the non-backtracking spectrum of a stochastic block model where we choose $a = 15$ and $b=6$ with size 4000. The insider real eigenvalue is close to $\frac{\mu_1}{\mu_2}$.} 
    \label{fig:conjectureSBM} 
\end{figure}

See Figure~\ref{fig:conjectureSBM} for the simulation on the spectrum of an RSBM and an Erd\H{o}s-R\'{e}nyi SBM.
From Theorem~\ref{thm:RSBM}, we obtain the following corollary for exact recovery with a spectral method in the RSBM.
\begin{cor}[Exact recovery in the RSBM] \label{cor:recovery} Assume $(d_1-d_2)^2>4(d_1+d_2-1)$ and $d_1$ is even.
Let $u_2=\begin{bmatrix}
    x_2\\
    y_2
\end{bmatrix}, u_2'=\begin{bmatrix}
    x_2'\\
    y_2'
\end{bmatrix} $  be an eigenvector of $\tilde B$ corresponding to eigenvalue $\mu_2,\mu'$, respectively. With high probability, we have  $x_2, y_2,x_2', y_2'\in \mathrm{span}\{\sigma\}$. 
\end{cor}
Corollary~\ref{cor:recovery} implies both eigenvectors  $u_2$ and $u_2'$ of $\tilde B$ can exactly recover the community assignment $\sigma$ with high probability. Such a spectral method based on the eigenvector associated with the insider eigenvalue for the Erd\H{o}s-R\'{e}nyi SBM was conjectured in \cite{dall2019revisiting} and remains open. Compared with the regular stochastic block model, the key difference for the Erd\H{o}s-R\'{e}nyi stochastic block model is the lack of degree concentration in the very sparse regime, which makes it challenging to establish a direct connection between eigenvectors of $\tilde B$ and $A$.

\begin{remark}[The parity constraint of $d_1$]
  In Theorem~\ref{thm:RSBM}, the constraint that $d_1$ is even is a technical condition due to the proof technique in \cite{brito2015recovery}, which involves random lifts studied in \cite{bordenave2020new,friedman2014relativized}. It was conjectured in \cite{brito2015recovery} that such a restriction can be removed. We expect by adapting the  proof techniques in \cite{bordenave2020new,brito2021spectral}, Theorem~\ref{thm:RSBM} can shown for all $(d_1-d_2)^2>4(d_1+d_2-1)$.  We leave it as a future direction.  
\end{remark}

\section{Proof of Theorem \ref{thm:real}}\label{sec:thm1}
We first derive an algebraic relation between eigenvalues and eigenvectors of $A$ and $B$ when $G$ is a $d$-regular graph.

\begin{lemma}[Spectral relation between $A$ and $\tilde B$]\label{lem:eiganvalues}
Let $G$ be a $d$-regular graph with adjacency matrix $A$ and non-backtracking matrix $B$ and $d\geq 2$.
Let $v_i$ be an eigenvector of $A$ with respect to an eigenvalue $\lambda_i, i\in [n]$.  
Then
\begin{equation}
    \mu_i, \mu_i' = \frac{\lambda_i \pm \sqrt{{\lambda_i}^2-4(d-1)}}{2}
\end{equation}
are two eigenvalues of $\tilde B$, with two corresponding eigenvectors of the form  
\begin{equation}\label{eq:eigenvectortb}
u_i = 
    \begin{bmatrix}
v_i\\
 \frac{\mu_i}{d-1} v_i
\end{bmatrix}  ,\quad u_i' = 
    \begin{bmatrix}
v_i\\
 \frac{\mu_i'}{d-1} v_i
\end{bmatrix} .
\end{equation}
\end{lemma}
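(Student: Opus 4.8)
The plan is a direct verification, exploiting that the only non-scalar block of $\tilde B$ is $A$. First I would note that for a $d$-regular graph $D = dI$, so \eqref{eq:tildeB} reads $\tilde B = \begin{bmatrix} 0 & (d-1)I \\ -I & A \end{bmatrix}$. I would then look for eigenvectors of the block form $u = \begin{bmatrix} v_i \\ c\,v_i \end{bmatrix}$ with $c$ a scalar, where $v_i$ is the given eigenvector of $A$. Applying $\tilde B$ and using $A v_i = \lambda_i v_i$ gives $\tilde B u = \begin{bmatrix} (d-1)c\, v_i \\ (-1 + \lambda_i c)\, v_i \end{bmatrix}$, so the eigenvector equation $\tilde B u = \mu u$ collapses (using $v_i \neq 0$) to the two scalar equations $(d-1)c = \mu$ and $-1 + \lambda_i c = \mu c$. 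Eliminating $c = \mu/(d-1)$, which is legitimate since $d \ge 2$, yields the single scalar quadratic $\mu^2 - \lambda_i \mu + (d-1) = 0$.

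The rest is reading off the solutions: the two roots of this quadratic are precisely $\mu_i,\mu_i' = \tfrac12\big(\lambda_i \pm \sqrt{\lambda_i^2 - 4(d-1)}\big)$, and substituting each back through $c = \mu/(d-1)$ produces exactly the vectors displayed in \eqref{eq:eigenvectortb}. Since $v_i \neq 0$, each such $u$ is nonzero (its top block is $v_i$), hence a genuine eigenvector of $\tilde B$. That completes the argument.

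There is no real obstacle here — the computation is a two-line check. The only points deserving a word of care are: (i) when $\lambda_i^2 = 4(d-1)$ the two roots, and hence $u_i$ and $u_i'$, coincide, so the statement should be read with $\mu_i = \mu_i'$ permitted; and (ii) for the subsequent use of this lemma in the proof of Theorem \ref{thm:real}, one also wants the converse — that \emph{every} eigenvalue of $\tilde B$ arises in this way — which follows either from the determinant identity \eqref{eq:Ihara_regular} or by observing that as $v_i$ ranges over an eigenbasis of $A$ the $2n$ vectors so produced span $\mathbb{C}^{2n}$; here the relation $\lambda = \mu + (d-1)/\mu$ inverting the quadratic is convenient for the bookkeeping.
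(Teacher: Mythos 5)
Your proof is correct and takes essentially the same approach as the paper: both reduce the eigenvector equation for $\tilde B$ to the scalar quadratic $\mu^2 - \lambda_i\mu + (d-1) = 0$ via the block structure and the relation $y = \tfrac{\mu}{d-1}x$. The only cosmetic difference is direction — the paper begins with an arbitrary eigenvector $[x;y]^\top$ of $\tilde B$ and deduces its form, while you posit the ansatz $[v_i; c\,v_i]^\top$ and verify it; your remarks on the degenerate case $\lambda_i^2 = 4(d-1)$ and on the converse needed for Theorem \ref{thm:real} are apt and match what the paper handles via multiplicity counting.
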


\begin{proof}
Let $\begin{bmatrix}
x\\
y
\end{bmatrix}, x,y\in \mathbb C^{n}$ be an eigenvector of $\tilde B$ with respect to an eigenvalue $\mu$, we have  $$ \mu \begin{bmatrix}
x\\
y
\end{bmatrix} = \begin{bmatrix}
0 &  (d-1) I\\
-I  & A
\end{bmatrix} \begin{bmatrix}
x\\
y
\end{bmatrix}. $$
This equation implies \begin{align}\label{eq:eigen}
    Ax = \left(\mu + \frac{d-1}{\mu}\right)x, \quad   y =  \frac{\mu}{d-1} x.
\end{align}

For each eigenvalue $\lambda_i$ of $A$ with a corresponding eigenvector $v_i\in \mathbb R^n$, there are two corresponding eigenvalue $ \mu_i, \mu'_i$ of $\tilde B$ satisfying the quadratic equation \begin{equation}\label{eq:mu}
    x^2 - \lambda_i x + d -1 = 0.
\end{equation}
In particular, $\lambda_1=d$ gives two real eigenvalues $\mu_1=d-1$ and $\mu_1'=1$.

By counting multiplicity, Equation \eqref{eq:mu} gives all the $2n$ eigenvalues of $\tilde B$. Thus, by Equations~\eqref{eq:eigen} and ~\eqref{eq:mu}, there are $n$ eigenvectors of $\tilde B$ of the form  \begin{equation}\label{eigenvector1}
u_i=    \begin{bmatrix}
v_i\\
 \frac{\mu_i}{d-1} v_i
\end{bmatrix} 
\text{, where } \mu_i = \frac{\lambda_i+\sqrt{{\lambda_i}^2-4(d-1)}}{2},
\end{equation}

and other $n$ eigenvectors of the form,
\begin{equation}\label{eigenvector2}
 u_i'=\begin{bmatrix}
v_i\\
 \frac{\mu_i'}{d-1} v_i
\end{bmatrix} 
\text{, where } \mu_i' = \frac{\lambda_i-\sqrt{{\lambda_i}^2-4(d-1)}}{2}.
\end{equation}
\end{proof}

With Lemma \ref{lem:eiganvalues},  we are ready to prove Theorem \ref{thm:real} for random $d$-regular graphs.
\begin{proof}[Proof of Theorem \ref{thm:real}]
From Equation \eqref{eq:mu}, when $|\lambda_i|^2\leq 4(d-1)$, we have 
\[\mu_i+ \mu_i'=\lambda_i, \quad |\mu_i|=|\mu_i'|=\sqrt{d-1},\] and   the real parts of $\mu_i,\mu_i'$ satisfy
 \begin{align}\label{eq:quadratic_coefficient}
 x_i = x_i' = \frac{\lambda_i}{2}.
 \end{align}
 
The Kesten-McKay law  in \cite{MCKAY1981203} shows that with high probability,  the limiting eigenvalue distribution of the adjacency matrix for a random $d$-regular graph converges weakly to a probability measure with density
\[ f(x)=\frac{d\sqrt{4(d-1)-x^2}}{2\pi (d^2-x^2)} \mathbf{1} \left\{ |x|\leq 2\sqrt{d-1}\right\}.\]

From the Kesten-McKay law above, with high probability, there are $n-o(n)$ many eigenvalues $\lambda_i$ satisfies $|\lambda_i|^2\leq 4(d-1)$. Therefore, with high probability, there are $2n-o(n)$ eigenvalues of $\tilde B$ on the circle of radius $\sqrt{d-1}$, and $n$ pairs of $x_i,x_i'$ satisfies \eqref{eq:quadratic_coefficient}. Therefore, the empirical measure of the real parts of $\mu_i$ converges weakly in probability to a rescaled Kesten-McKay law given by 
    \begin{equation}   
   \mu_{\mathrm{KM}}(x)=   \frac{ 2d\sqrt{(d-1)-x^2} }{\pi(d^2 - 4x^2)} \mathbf{1} \left\{ |x|\leq \sqrt{d-1}\right\}.
    \end{equation}

 When $d\to\infty$ as $n\to\infty$, it is shown in \cite{tran2013sparse,dumitriu2012sparse} that the empirical spectral distribution of $\frac{A}{\sqrt{d-1}}$ converges weakly in probability to a semicircle law. With  Equation \eqref{eq:quadratic_coefficient}, following  the same argument  as in the case for fixed $d$, the empirical measure  $\frac{1}{2n}\sum_{i=1}^{2n} \delta_{2x_i/\sqrt{d-1}}$ converges weakly in probability to the semicircle law.
\end{proof}

\section{Proof of Theorem \ref{thm:tildeB}}

Our proof is based on the following eigenvector delocalization bound from \cite{huang2021spectrum}.
\begin{lemma}[Theorem 1.4 in \cite{huang2021spectrum}] \label{lem:A_delocalization}
For any fixed $d\geq 3$, there exists absolute constants $C_1, C_2>0$ such that with probability at least $1-n^{-C_1}$, all unit eigenvectors $v_i$ of $A$ for a random $d$-regular graph satisfies
    \begin{align}
        \|v_i\|_{\infty}\leq \frac{\log^{C_2}(n)}{\sqrt n}.
    \end{align}
\end{lemma}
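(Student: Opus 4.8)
The statement is quoted directly from \cite{huang2021spectrum}, so the most honest ``proof'' is a citation; nevertheless, here is the strategy I would follow for an independent argument. The plan is to deduce the $\ell^\infty$ bound from a local Kesten--McKay law for the normalized adjacency matrix $H=A/\sqrt{d-1}$, via the standard reduction to diagonal resolvent entries. Writing $G(z)=(H-z)^{-1}$ with $z=E+\mathrm{i}\eta$, the spectral decomposition gives, for any $\ell_2$-unit eigenvector $v$ of $H$ with eigenvalue $\lambda$ and any vertex $k$,
\begin{align}
\mathrm{Im}\,G_{kk}(\lambda+\mathrm{i}\eta)=\sum_j \frac{\eta\,|v_j(k)|^2}{(\lambda_j-\lambda)^2+\eta^2}\ \geq\ \frac{|v(k)|^2}{\eta},
\end{align}
where $v_j,\lambda_j$ run over the eigenvectors and eigenvalues of $H$. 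Hence it suffices to show that with probability $1-n^{-C_1}$ one has $|G_{kk}(E+\mathrm{i}\eta)|\leq \log^{C}n$ simultaneously for all vertices $k$, all $E$ in a neighborhood of the spectrum, and all $\eta\geq \log^{C}n/n$; taking $\eta$ of that order then yields $\|v\|_\infty^2\leq \log^{O(1)}n/n$ for every eigenvector at once, which is the claim. The ``for all $E$'' part will be handled by a union bound over an $n^{-O(1)}$-net in $E$, using the Lipschitz continuity of $E\mapsto G_{kk}(E+\mathrm{i}\eta)$ at fixed $\eta$.

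The work is therefore to establish this local Kesten--McKay law down to scale $\eta\sim\log^{C}n/n$, including near the spectral edges $\pm2$. I would combine two ingredients. First, a priori spectral confinement: with high probability every eigenvalue of $H$ other than the trivial $\sqrt{d-1}$ lies within $n^{-c}$ of $[-2,2]$ -- this is Friedman's second eigenvalue theorem, for which \cite{bordenave2020new} gives a proof with a polynomially small error; this restricts the resolvent analysis to a neighborhood of $[-2,2]$ and lets one treat the trivial eigenvector separately (it is exactly $n^{-1/2}\mathbf{1}$). Second, a self-consistent equation: on the infinite $d$-regular tree the resolvent entries are deterministic and solve a fixed-point recursion whose solution is the Stieltjes transform $m_{\mathrm{KM}}$ of the Kesten--McKay law; on a random regular graph one expands $G_{kk}$ by Schur complements along the edges incident to $k$, compares term by term with the tree recursion, and bounds the discrepancy. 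The essential device -- and the reason the fixed-$d$ case is genuinely hard -- is that a random $d$-regular graph has almost no local randomness, so the fluctuation-averaging cancellations that make such self-consistent arguments work for Wigner or Erd\H{o}s--R\'enyi matrices are not directly available. I would instead use a \emph{local resampling} (switching) of the edges in a bounded ball around each vertex: re-randomizing those edges produces an auxiliary graph that is close in total variation to the original yet carries enough fresh randomness to close a stability/fixed-point argument, and one then transfers the estimate back by a switching comparison. (For growing $d$ the analogous local semicircle law is available from \cite{dumitriu2012sparse,tran2013sparse,bauerschmidt2017local,bauerschmidt2019local}, which is why the result extends to that regime.)

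The hardest step will be carrying this out uniformly up to the edge. Near $E=\pm2$ the Kesten--McKay density vanishes like a square root, so $\mathrm{Im}\,m_{\mathrm{KM}}\asymp\sqrt{\kappa+\eta}$ with $\kappa=\big||E|-2\big|$, and the self-consistent equation is only weakly stable there; the switching error terms must therefore be shown to be smaller than this vanishing quantity at the scale $\eta\asymp\log^{C}n/n$. This forces a multi-scale bootstrap in $\eta$ -- starting at a large scale where concentration is easy and descending -- together with iterated switchings to generate higher-order cancellation, and the combinatorial control of how many switchings perturb a given ball (with the attendant high-probability estimates) is the technical core. Once the edge-uniform local law is in place, the delocalization bound for all eigenvectors simultaneously follows immediately from the reduction in the first paragraph, with the constants $C_1,C_2$ emerging from the net/union bound and the power of $\log n$ in the local law.
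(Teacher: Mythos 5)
The paper does not prove this lemma at all---it is imported verbatim as Theorem 1.4 of \cite{huang2021spectrum}---so your citation is exactly what the paper does, and your sketch (reduction of $\|v\|_\infty$ to $\mathrm{Im}\,G_{kk}$ at scale $\eta\sim\log^{C}n/n$, plus a local Kesten--McKay law proved by local resampling/switchings with a multi-scale bootstrap up to the edge) faithfully reflects the strategy of the cited work. The only nitpick is cosmetic: the trivial eigenvalue of $H=A/\sqrt{d-1}$ is $d/\sqrt{d-1}$, not $\sqrt{d-1}$, which does not affect the argument.
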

With Lemmas~\ref{lem:eiganvalues} and \ref{lem:A_delocalization}, we are able to prove Theorem \ref{thm:tildeB}.

\begin{proof}[Proof of Theorem~\ref{thm:tildeB}]
    From \eqref{eq:eigenvectortb}, any eigenvector $u_i$ of $\tilde B$ associated with $v_i$ of $A$ satisfies
    \begin{align}
        \frac{\|u_i\|_{\infty}}{\|u_i\|_2} \leq \frac{\max \left\{ 1, \frac{|\mu_i|}{d-1}\right\}}{\sqrt{1+\frac{|\mu_i|^2}{(d-1)^2}}}\frac{\|v_i\|_{\infty}}{\|v_i\|_2}\leq \frac{\|v_i\|_{\infty}}{\|v_i\|_2}.
    \end{align}
    Then Theorem~\ref{thm:tildeB} follows from Lemma~\ref{lem:A_delocalization}.
\end{proof}

\section{Proof of Theorem \ref{thm:B}}
\label{sec:thm3}
We first introduce a lemma contained in \cite{lubetzky2016cutoff} for a deterministic relation between eigenvectors of $B$ and eigenvectors of $A$ and provide a proof for completeness.
\begin{lemma}[Remark 3.4 in \cite{lubetzky2016cutoff}] \label{lem:directededge}
Let  $\vec{E}$ be oriented edge set for a $d$-regular graph $G=(V,E)$ defined in \eqref{eq:def_oriented}. Let $\mu_i, \mu_i'$ denote each eigenvalue pair of $\tilde B$ corresponding to eigenvalue $\lambda_i$ of $A$ through the quadratic equation \eqref{eq:mu} with $|\lambda_i|\not=d$. Let $v_i$ be a unit eigenvector of $A$ associated $\lambda_i$. Define $\tilde{w}_i, \tilde{w}_i'\in \mathbb C^{nd}$ such that for any $(x,y)\in \vec{E}$,
\begin{equation}\label{eq:eigenvectorB}
    \tilde w_i(x,y) := \mu_i v_i(y) - v_i(x), \quad \tilde w_i'(x,y) := \mu_i' v_i(y) - v_i(x).
\end{equation}
Then, each $\tilde w_i, \tilde{w}_i'$ is an eigenvector of $B$ associated with $\mu_i, \mu_i'$, respectively. 
\end{lemma}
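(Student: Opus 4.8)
The plan is to verify directly that $B \tilde{w}_i = \mu_i \tilde{w}_i$ by exploiting the structure of $\tilde{w}_i(x,y) = \mu_i v_i(y) - v_i(x)$ together with the eigenvector equation $A v_i = \lambda_i v_i$ and the quadratic relation $\mu_i^2 - \lambda_i \mu_i + (d-1) = 0$ from \eqref{eq:mu}. First I would write out $(B\tilde{w}_i)(u,v)$ from the definition of $B$: since $B_{(u,v),(x,y)} = 1$ exactly when $x = v$ and $y \neq u$, we get
\[
(B\tilde{w}_i)(u,v) = \sum_{\substack{y \sim v \\ y \neq u}} \tilde{w}_i(v,y) = \sum_{\substack{y \sim v \\ y \neq u}} \bigl( \mu_i v_i(y) - v_i(v) \bigr).
\]
The sum over neighbors $y \sim v$ of $v_i(y)$ equals $(Av_i)(v) = \lambda_i v_i(v)$, so removing the $y = u$ term gives $\sum_{y \sim v, y\neq u} v_i(y) = \lambda_i v_i(v) - v_i(u)$. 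There are $d-1$ terms in the sum (since $u$ is one of the $d$ neighbors of $v$), so the $-v_i(v)$ contributions total $-(d-1) v_i(v)$.

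Putting these together,
\[
(B\tilde{w}_i)(u,v) = \mu_i \bigl( \lambda_i v_i(v) - v_i(u) \bigr) - (d-1) v_i(v) = \bigl( \mu_i \lambda_i - (d-1)\bigr) v_i(v) - \mu_i v_i(u).
\]
Now I invoke the quadratic equation $\mu_i^2 = \lambda_i \mu_i - (d-1)$, i.e. $\mu_i \lambda_i - (d-1) = \mu_i^2$, so the right-hand side becomes $\mu_i^2 v_i(v) - \mu_i v_i(u) = \mu_i \bigl( \mu_i v_i(v) - v_i(u) \bigr) = \mu_i \tilde{w}_i(u,v)$. This establishes $B\tilde{w}_i = \mu_i \tilde{w}_i$, and the identical computation with $\mu_i'$ in place of $\mu_i$ gives $B\tilde{w}_i' = \mu_i' \tilde{w}_i'$. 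The only remaining point is to check that $\tilde{w}_i$ is not the zero vector: if $\mu_i v_i(y) = v_i(x)$ for all oriented edges $(x,y)$, then $v_i$ would be constant on each connected component up to the factor $\mu_i$, forcing $\lambda_i = 1$ (the case excluded by hypothesis) or $v_i \equiv 0$; I would spell out this short argument to conclude $\tilde{w}_i \neq 0$, so it is genuinely an eigenvector.

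I do not anticipate a serious obstacle here — the statement is a deterministic algebraic identity and the computation is essentially forced once one writes the action of $B$ in terms of the adjacency structure. The only mild subtlety is the bookkeeping of which neighbor is excluded in the non-backtracking sum and the accounting of the $(d-1)$ repeated terms, which is exactly where $d$-regularity enters; a non-regular graph would spoil the $-(d-1)v_i(v)$ simplification. The nonvanishing check for $\tilde{w}_i$ is the one place requiring a (brief) argument rather than pure computation, but it follows readily from connectedness considerations and the exclusion $\lambda_i \neq 1$.
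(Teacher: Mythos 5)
Your main computation is essentially identical to the paper's proof: both expand $(B\tilde w_i)(u,v)$ as a sum over the $d-1$ non-backtracking out-neighbors, split off the adjacency sum $\sum_{y\sim v}v_i(y) = \lambda_i v_i(v)$, collect the $-(d-1)v_i(v)$ contribution from $d$-regularity, and then substitute the quadratic relation $\mu_i\lambda_i - (d-1) = \mu_i^2$. The only difference is cosmetic variable naming. So the core of the argument is correct and the same.

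Your extra paragraph checking $\tilde w_i \neq 0$ is a sensible instinct (the paper omits this), but the details are off. From $\mu_i v_i(y) = v_i(x)$ on every oriented edge, applying the relation to both $(x,y)$ and $(y,x)$ gives $\mu_i^2 v_i(x) = v_i(x)$, so on any connected component either $v_i \equiv 0$ or $\mu_i^2 = 1$; the phrase ``$v_i$ constant up to the factor $\mu_i$'' doesn't capture this unless $\mu_i = 1$. Feeding $\mu_i = \pm 1$ into the quadratic \eqref{eq:mu} forces $\lambda_i = \pm d$, not $\lambda_i = 1$. So the correct exclusion that guarantees $\tilde w_i \neq 0$ is $\mu_i \neq \pm 1$ (equivalently $\lambda_i \neq \pm d$), which is consistent with Theorem \ref{thm:B} assuming $\mu_i \neq 1$; the hypothesis ``$\lambda_i \neq 1$'' as written in the lemma appears to be a typo and in any case is not what your degeneracy argument produces.
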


\begin{proof}
   It suffices to check $\tilde w_i, i\in [n]$, and the same argument works for $\tilde w_i'$. For any $(x,y)\in \vec{E}$, with \eqref{eq:eigenvectorB}, we are able to calculate 
 \begin{align*}
    (B\tilde w_i)(x,y) &= \sum_{\substack{z:(y,z) \in \vec{E} \\ z \neq x}} (\mu_i v_i(z) - v_i(y)) \\
    &= \mu_i [(Av_i)(y) - v_i(x)] - (d-1) v_i(y) \\
    &= [\mu_i \lambda_i - (d-1)] v_i(y) - \mu_i v_i(x) \\
    &= [\mu_i^2  + d -1 - (d-1)]v_i(y) - \mu_i v_i(x) \\
    &= \mu_i \tilde w_i(x,y),
\end{align*}
where in the fourth line $ \mu_i \lambda_i$ is replaced by $ \mu_i^2  + d -1 $  due to  the quadratic equation \eqref{eq:mu}. Then, we have  $B\tilde w_i = \mu_i \tilde w_i$. When $|\lambda_i|\not= d$, $\mu_i\not=\pm 1$. We can check $\tilde{w}_i\not=0$,
which implies $w_i$ is an eigenvalue of $B$ with the corresponding eigenvalue $ \mu_i$.
\end{proof}
We can derive a deterministic $\ell_{\infty}$-norm bound for eigenvectors of $B$ constructed from Lemma \ref{lem:directededge} as follows.
\begin{lemma}[Deterministic $\ell_{\infty}$-norm bound for eigenvectors of $B$] \label{lem:deterministic_graph}
Let $B$ be the non-backtracking operator of a connected $d$-regular graph, and let $ \tilde w_i$ be eigenvectors of $B$ 
associated with $ \mu_i$ constructed in Lemma \ref{lem:directededge} where $\mu_i\notin \{1,d-1\}$.   Then 
\begin{align}\label{eq:deterministic_delocalization}
    \frac{\|\tilde w_i\|_{\infty}}{\|\tilde w_i\|_2} \leq \frac{\|v_i\|_{\infty} (|\mu_i|+1)}{\sqrt{d^2-\lambda_i^2}}.
\end{align}
The same bound holds for $\tilde w_i'$.
\end{lemma}
\begin{proof}
It suffices to prove the estimate for $\tilde w_i$.    From \eqref{eq:eigenvectorB},
\begin{equation}\label{eq:infinite_norm}
    \| \tilde w_i \|_{\infty} \leq \sup_{(x,y)\in \overrightarrow{E}} |\mu_i v_i(y) - v_i(x)| 
    \leq \| v_i \|_{\infty} ( |\mu_i|  +1 ).
\end{equation}
     On the other hand, since $\mu_i,\mu_i'$ is a conjugate pair from \eqref{eq:mu},
     \begin{align}
    \| \tilde w_i \|_2^2 
    &= \sum_{(x,y)\in \overrightarrow{E}} ( \mu_i v_i(y) - v_i(x))(\mu_i' v_i(y)-v_i(x)) \\
    &= |\mu_i|^2 \sum_{(x,y)\in \overrightarrow{E}} (v_i(y))^2 + \sum_{(x,y)\in \overrightarrow{E}} (v_i(x))^2 - (\mu_i+\mu_i') \sum_{(x,y)\in \overrightarrow{E}} v_i(y)v_i(x)\\
    &= d|\mu_i|^2 \|v_i\|_2^2+ d\|v_i\|_2^2 -(\mu_i+\mu_i') \sum_{(x,y)\in \overrightarrow{E}} v_i(y)v_i(x)\\
    &=d^2-\lambda_i \sum_{(x,y)\in \overrightarrow{E}} v_i(y)v_i(x).
\end{align}
Moreover, 
\begin{align}
    \sum_{(x,y)\in \overrightarrow{E}} v_i(y)v_i(x)=\sum_{x\in V} (Av_i)(x) v_i(x)=\lambda_i\sum_{x\in V}  v_i^2(x)=\lambda_i.
\end{align}
This gives us 
\begin{align}\label{eq:lowerbound_2}
    \| \tilde w_i \|_2^2=d^2-\lambda_i^2.
\end{align}
Since $\mu_i\notin\{1,d-1\}$, we have $\lambda_i\not=d$.
Equations \eqref{eq:infinite_norm} and \eqref{eq:lowerbound_2} imply \eqref{eq:deterministic_delocalization}.
\end{proof}

Now, we are ready to prove Theorem \ref{thm:B}. 

\begin{proof}[Proof of Theorem~\ref{thm:B}]
We consider eigenvectors of $B$ associated with eigenvalues $\mu$, $\mu_i\in\{1,d-1\}$ and $\mu_i\notin\{1,d-1\} $ separately. 

 \textit{Case 1: $\mu_i\in\{1,d-1\}$.} The deterministic eigenvalue $\lambda_1=d$ of $A$ with eigenvector $v_1=[1,\dots, 1]^\top \in \mathbb R^n$ yields two eigenvalues $\mu_1=d-1, \mu_1'=1$. Let  $\tilde w_1$ be the eigenvector given by \eqref{eq:eigenvectorB} associated with $\mu_1$. One can directly check that 
      \begin{align}
          \frac{ \|\tilde w_1\|_{\infty}}{\|\tilde w_1\|_2}= \frac{1}{\sqrt{nd}}.
      \end{align}

\textit{Case 2: $\mu_i\notin\{1,d-1\}$.} By the spectral gap bound from \cite[Theorem 1]{bordenave2020new},  we have for some constants $C,C_1>0$, with probability at least $1-n^{-C}$, for any eigenvalue $\mu$ of $\tilde B$ with $\mu\not=d-1$,
\begin{align}\label{eq:spectralgap}
    |\mu|\leq \sqrt{d-1}+C_1\left( \frac{\log \log n}{\log n}\right)^2,
\end{align} 
and for any eigenvalue $\lambda$ of $A$ with $\lambda \neq d$, \begin{align}\label{eq:spectralgap_lambda}
    |\lambda| \leq 2\sqrt{d-1}+C_1\left( \frac{\log \log n}{\log n}\right)^2.
\end{align}
Therefore with \eqref{eq:deterministic_delocalization} , with probability at least $1-n^{-C}$, \begin{equation}
     \frac{\|\tilde w_i\|_{\infty}}{\|\tilde w_i\|_2} \leq \frac{C'(\sqrt{d-1}+1) }{ d-2 } \|v_i\|_{\infty} \leq \frac{2C'}{ \sqrt{d}}   \|v_i\|_{\infty},
\end{equation}
      for an absolute constant $C'>0$. 
 The delocalization bound of $v_i$ in Lemma \ref{lem:A_delocalization} implies that with probability at least $1-n^{-C_1}$, 
       \begin{align} \label{eq:ratiobound}
       \frac{\|\tilde w_i\|_{\infty}}{\|\tilde w_i\|_2} \leq \frac{\log^{C_2}(n)}{\sqrt{n}}.
       \end{align}
       With the two cases discussed above, taking $\ell_2$-normalized eigenvector $w_i=\frac{\tilde w_i}{\|\tilde w_i\|_2}$ for $i\in [n]$ completes the proof. 
\end{proof}

\section{Proof of Theorem \ref{thm:bi_real}}

Similar to Lemma~\ref{lem:eiganvalues}, we first establish a spectral relation between $A$ and $\tilde B$ for a regular hypergraph graph.

\begin{lemma}[Spectral relation between $A$ and $\tilde B$]\label{lem:bi_eigenvalues}
Let $H = (V, E)$ be a $k$-uniform, $d$-regular hypergraph with adjacency matrix $A$ and reduced  non-backtracking matrix $\tilde B$. Let $\lambda_i$ be an eigenvalue of $A$. Then each $\lambda_i$ corresponds to two eigenvalues $\mu_{i},\mu_i'$ of $\tilde B$, which satisfy the equation
\begin{equation}\label{eq:bi_mu}
    \mu^2 - (\lambda_i-k+2 )\mu + (d-1)(k-1) = 0,
\end{equation}
and with the corresponding eigenvectors 
\begin{equation}\label{eq:bi_eigenvector_H}
u_i = 
    \begin{bmatrix}
v_i\\
 \frac{\mu_i}{d-1} v_i
\end{bmatrix}  ,\quad u_i' = 
    \begin{bmatrix}
v_i\\
 \frac{\mu_i'}{d-1} v_i
\end{bmatrix} .
\end{equation}
\end{lemma}

\begin{proof}
Let $\begin{bmatrix}
x\\
y
\end{bmatrix}, x,y\in \mathbb C^{n}$ be an eigenvector of $\tilde B$ with respect to an eigenvalue $\mu$, we have  $$ \mu \begin{bmatrix}
x\\
y
\end{bmatrix} = \begin{bmatrix}
0 &  (d-1) I\\
-(k-1)I  & A-(k-2)I
\end{bmatrix} \begin{bmatrix}
x\\
y
\end{bmatrix}. $$
This equation implies \begin{align}
    Ax = \left(\mu + \frac{(k-1)(d-1)}{\mu}+k-2\right)x, \quad   y =  \frac{\mu}{d-1} x.
\end{align}
For each eigenvalue $\lambda_i$ of $A$ with a corresponding eigenvector $v_i\in \mathbb R^n$, there are two corresponding eigenvalue $ \mu_i, \mu'_i$ of $\tilde B$ satisfying the quadratic equation \begin{equation}
    \mu^2 - (\lambda_i-k+2) \mu + (d -1)(k-1) = 0.
\end{equation}
This gives all the $2n$ eigenvalues of $\tilde B$. The rest of the proof follows in the same way as in Lemma~\ref{lem:eiganvalues}.
\end{proof}

\begin{proof}[Proof of Theorem~\ref{thm:bi_real}]
    From Lemma~\ref{lem:bi_eigenvalues}, all eigenvalues of $\tilde B$ satisfies \eqref{eq:bi_mu}.
When \[|\lambda_i-k+2|^2\leq 4(d-1)(k-1),\] we have 
\[\mu_i+ \mu_i'=\lambda_i-k+2, \quad |\mu_i|=|\mu_i'|=\sqrt{(d-1)(k-1)},\] and   the real parts of $\mu_i,\mu_i'$ satisfy
 \begin{align}\label{eq:bi_quadratic_coefficient}
 x_i = x_i' = \frac{\lambda_i}{2}.
 \end{align}

 For different  regimes of $(d,k)$, the limiting spectral distribution of the normalized adjacency matrix $\tilde A:=\frac{A-(k-2)}{\sqrt{(d-1)(k-1)}}$ for random regular hypergraphs were obtained in \cite[Corollary 6.11]{dumitriu2021spectra}:
 \begin{itemize}
     \item If $d,k$ are fixed,   the empirical spectral distribution of $\tilde{A}$ converges in probability to a probability measure supported on $[-2,2]$ whose density function is given by \begin{align}
	f(x)=\frac{1+\frac{k-1}{q}}{(1+\frac{1}{q}-\frac{x}{\sqrt{q}})(1+\frac{(k-1)^2}{q}+\frac{(k-1)x}{\sqrt{q}})}\frac{1}{\pi}\sqrt{1-\frac{x^2}{4}},
	\end{align}
	where $q=(k-1)(d-1)$.
	 \item For $d,k\to\infty$ with $\frac{d}{k}\to\alpha\geq 1$ and $d\leq \frac{n}{32}$,  the empirical spectral distribution of $\tilde{A}$ converges in probability to  a measure supported on $[-2,2]$ with a density function given by	
	 \begin{align}\label{mua}
	g(x)=\frac{\alpha}{1+\alpha+\sqrt{\alpha}x}\frac{1}{\pi}\sqrt{1-\frac{x^2}{4}} .
	\end{align}
    \item If $d\to\infty, d=o(n^{\epsilon})$ for any $\epsilon>0$ and $\frac{d}{k}\to\infty$,  the  the empirical spectral distribution of $\tilde A$ converges to the semicircle law in probability.
 \end{itemize}
 With the relation \eqref{eq:bi_quadratic_coefficient}, we can follow the same argument as in the proof of Theorem~\ref{thm:real} to complete the proof of Theorem~\ref{thm:bi_real}.
\end{proof}

\section{Proof of Theorem~\ref{thm:bi_tildeB}} 
\begin{proof}[Proof of Theorem~\ref{thm:bi_tildeB}~(1)]
      From \eqref{eq:bi_eigenvector_H},  the corresponding unit eigenvectors $u_i, u_i'$ satisfies 
\[ \|u_i\|_{\infty}, \|u_i'\|_{\infty}= \frac{\max \left\{ 1, \frac{|\mu_i|}{d-1}\right\}}{\sqrt{1+ \frac{|\mu_i|^2}{(d-1)^2}}} \|v_i\|_{\infty}\leq \|v_i\|_{\infty}. \]
This proves the first statement of Theorem~\ref{thm:bi_tildeB}.
\end{proof}

We now consider the second statement of Theorem~\ref{thm:bi_tildeB}. The following lemma provides a relation between eigenvectors of $B$ and $A$ for regular hypergraphs.

\begin{lemma}[Eigenvectors of $B$ for regular hypergraphs]\label{lem:eigenvector_hypergraph}
Let  $\vec{E}$ be oriented hyperedge set for a $(d,k)$-regular hypergraph $H=(V,E)$. Let $\mu_i, \mu_i'$ denote each eigenvalue pair of $\tilde B$ corresponding to eigenvalue $\lambda_i$ of the adjacency matrix $A$, through the quadratic equation \eqref{eq:bi_mu}. Let $v_i$ be a unit eigenvector of $A$ associated $\lambda_i$. Define $\tilde{w}_i, \tilde{w}_i'\in \mathbb C^{nd}$ such that for any $(x,e)\in \vec{E}$,
\begin{equation} \label{eq:w_H}
    \tilde w_i(x,e) :=\mu_i \left(\sum_{y\in e, y\not=x} v_i(y)\right)-(k-1) v_i(x), \quad \tilde w_i'(x,e) :=\mu_i' \left(\sum_{y\in e, y\not=x} v_i(y)\right)- (k-1)v_i(x).
\end{equation}
Assume $\tilde w_i, \tilde{w}_i'\not=0$. Then  $\tilde w_i, \tilde{w}_i'$ is an eigenvector of $B$ associated with $\mu_i,\mu_i'$, respectively. 
\end{lemma}

\begin{proof}
   It suffices to consider $\tilde w_i, i\in [n]$. With \eqref{eq:w_H} and the definition of $B$ for hypergraphs, we have for any $(x,e)\in \vec{E}$,
 \begin{align}
    &(B\tilde w_i)(x,e) = \sum_{(y,f)\in \vec{E}:y\in e, y\not=x, f\not=e} \tilde{w}_i(y,f)\\
    &=\sum_{(y,f)\in \vec{E}:y\in e, y\not=x, f\not=e} \left( \mu_i \left(\sum_{z\in f, z\not=y} v_i(z)\right)-(k-1)v_i(y)\right)\\
    &=\mu_i \left(\sum_{(y,f)\in \vec{E}:y\in e, y\not=x, f\not=e} \sum_{z\in f,z\not=y} v_i(z)\right) -(d-1)(k-1) \sum_{y\in e, y\not=x} v_i(y)\\
    &=\mu_i \left( \sum_{y\in e, y\not=x} (Av_i)(y)-\sum_{y\in e, y\not=x}\sum_{z\in e,z\not=y} v_i(z) \right)-(d-1)(k-1) \sum_{y\in e, y\not=x} v_i(y)\\
    &=\mu_i \left( \sum_{y\in e, y\not=x} \lambda_i v_i(y)-\left(\sum_{y\in e, y\not=x}(k-2)v_i(y) \right)-(k-1)v_i(x) \right)-(d-1)(k-1) \sum_{y\in e, y\not=x} v_i(y)\\
    &=(\mu_i\lambda_i-(k-2)\mu_i-(d-1)(k-1))\left(\sum_{y\in e, y\not=x} v_i(y)\right)-\mu_i (k-1) v_i(x) \\
    &=\mu_i^2\left(\sum_{y\in e, y\not=x} v_i(y)\right)-\mu_i (k-1) v_i(x) \label{eq:line3}\\
    &=\mu_i \left(\mu_i \sum_{y\in e, y\not=x} v_i(y)-(k-1)v_i(x)\right) =\mu_i \tilde{w}_i(x,e),
\end{align}
where in \eqref{eq:line3}, we use the relation between $\mu_i,\lambda_i$ in \eqref{eq:bi_mu}. This implies  $\tilde{w}_i$ is an eigenvector of $B$ associated with $\mu_i$.
\end{proof}

Lemma~\ref{lem:eigenvector_hypergraph} can be used to prove the following deterministic $\ell_{\infty}$-norm bound for eigenvectors. When $k=2$, Lemma~\ref{lem:deterministic_H} reduces to Lemma~\ref{lem:deterministic_graph}.

\begin{lemma}[Deterministic $\ell_{\infty}$-norm bound for eigenvectors of $B$]
\label{lem:deterministic_H}
    Let $B$ be the non-backtracking operator of a $(d,k)$-regular hypergraph. Let $ \tilde w_i$ be the eigenvector of $B$ 
associated with $ \mu_i$ constructed in Lemma \ref{lem:eigenvector_hypergraph} with $\mu_i\not\in \{ 1, (d-1)(k-1)\}$ and $\lambda_i$ be the corresponding eigenvalue of $A$.   Then 
\begin{align}\label{eq:deterministic_delocalization_2}
    \frac{\|\tilde w_i\|_{\infty}}{\|\tilde w_i\|_2} \leq  \frac{ \sqrt{k-1}(|\mu_i|+1) }{\sqrt{(d+\lambda_i)(d(k-1)-\lambda_i)} } \| v_i \|_{\infty}.
\end{align}
The same bound holds for $\tilde w_i'$.
\end{lemma}
\begin{proof}
    It suffices to prove the estimate for $\tilde w_i$.    From \eqref{eq:w_H},
\begin{equation}\label{eq:infinite_norm_H}
    \| \tilde w_i \|_{\infty}  \leq \sup_{(x,e)  \in \vec{E}} \left| \mu_i \bigg(\sum_{y\in e, y\not=x} v_i(y) \bigg)-(k-1) v_i(x) \right|
    \leq \| v_i \|_{\infty} \bigg( |\mu_i|(k-1) +k-1 \bigg).
\end{equation}
     On the other hand, since $\mu_i,\mu_i'$ is a conjugate pair in \eqref{eq:bi_mu},
     \begin{align}
    \| \tilde w_i \|_2^2 
    &= \sum_{(x,e)  \in \vec{E}} \left( \mu_i \bigg(\sum_{y\in e, y\not=x} v_i(y) \bigg)-(k-1) v_i(x) \right) \left( \mu_i' \bigg(\sum_{y\in e, y\not=x} v_i(y)\bigg)-(k-1) v_i(x) \right) \\
     &= |\mu_i|^2 \sum_{(x,e)  \in \vec{E}} \bigg(\sum_{y\in e, y\not=x} v_i(y) \bigg)^2 + (k-1)^2 \sum_{(x,e)  \in \vec{E}} v_i(x)^2 \\
    &\quad - (\mu_i+\mu_i')(k-1) \sum_{(x,e)  \in \vec{E}} \bigg(\sum_{y\in e, y\not=x} v_i(y) \bigg) v_i(x)\\
    &=(d-1)(k-1)\sum_{(x,e)  \in \vec{E}} \bigg(\sum_{y\in e, y\not=x} v_i(y) \bigg)^2 +(k-1)^2 \sum_{(x,e)  \in \vec{E}} v_i(x)^2\\
    &\quad -(\lambda_i-k+2)\sum_{(x,e)  \in \vec{E}} \bigg(\sum_{y\in e, y\not=x} v_i(y) \bigg) v_i(x) \label{eq:hyper_three_term}. 
\end{align}
For the first term in \eqref{eq:hyper_three_term},
\begin{align}\label{eq:hyper_suqare}
    &\sum_{(x,e)  \in \vec{E}} \bigg(\sum_{y\in e, y\not=x} v_i(y) \bigg)^2 \\
    =&  \sum_{(x,e)  \in \vec{E}} \bigg(\sum_{y\in e} v_i(y) \bigg)^2 - 2 \sum_{(x,e)  \in \vec{E}} \bigg(\sum_{y\in e} v_i(y) \bigg) v_i(x) + \sum_{(x,e)  \in \vec{E}} v_i(x)^2.
\end{align}
Using the fact that $v_i$ is a unit vector, we have $$\sum_{(x,e)  \in \vec{E}} v_i(x)^2 = d .$$ We then calculate the first and second terms separately in the following: \begin{align}\label{eq:hyper_suqare_1}
     \sum_{(x,e)  \in \vec{E}} \bigg(\sum_{y\in e} v_i(y) \bigg)^2 &= k \sum_{y \in V} v_i(y) \bigg( \sum_{x \in V} A_{yx} v_i(x) \bigg) + kd \sum_{y \in V} v_i(y)^2 \\
     &= k  \sum_{y \in V} v_i(y) \lambda_i v_i(y) + kd = k\lambda_i + kd,
\end{align}
and \begin{align}\label{eq:hyper_suqare_2}
    \sum_{(x,e)  \in \vec{E}} \bigg(\sum_{y\in e} v_i(y) \bigg) v_i(x)
    &= \sum_{(x,e)  \in \vec{E}} \bigg(\sum_{y\in e, y \neq x} v_i(y) \bigg) v_i(x) + \sum_{(x,e)  \in \vec{E}} v_i(x)^2 \\
   &= \sum_{x \in V} \lambda_i v_i(x)^2 + d = \lambda_i+d.
\end{align}
Together, we can use \eqref{eq:hyper_suqare_1} and \eqref{eq:hyper_suqare_2} to get the value for \eqref{eq:hyper_suqare}, that is  \begin{align}
    \sum_{(x,e)  \in \vec{E}} \bigg(\sum_{y\in e, y\not=x} v_i(y) \bigg)^2 = (k-2)\lambda_i+(k-1)d .
\end{align}
Also, we have from \eqref{eq:hyper_suqare_2},
\begin{align}
     \sum_{(x,e)  \in \vec{E}} \bigg(\sum_{y\in e, y\not=x} v_i(y) \bigg) v_i(x) &=\lambda_i.
\end{align}
From \eqref{eq:hyper_three_term}, these identities give us \begin{align}\label{eq:lowerbound_H}
    \| \tilde w_i \|_2^2& =(k-1)(d+\lambda_i)(d(k-1)-\lambda_i).
\end{align}
When $\mu_i\notin\{1,(d-1)(k-1)\}$, we have $\lambda_i\notin \{d(k-1), -d\}$ from \eqref{eq:bi_mu}. Hence, $\tilde w_i$ is a nonzero vector. 
Equations \eqref{eq:infinite_norm_H} and \eqref{eq:lowerbound_H} imply \eqref{eq:deterministic_delocalization_2}.
\end{proof}

With Lemma~\ref{lem:deterministic_H}, we are ready to prove the second statement of Theorem~\ref{thm:bi_tildeB}.
\begin{proof}[Proof of Theorem~\ref{thm:bi_tildeB} (2)]
 Let $A$ be the adjacency matrix of a $(d,k)$-random regular hypergraph. The largest eigenvalue of $A$ is $d(k-1)$ with an eigenvector $v_1=\frac{1}{\sqrt n}[1,\dots,1]^\top$.   By Lemma \ref{lem:deterministic_H}, We consider when $ \mu_i\in\{1,(d-1)(k-1)\}$ and $ \mu_i\notin\{1,(d-1)(k-1)\}$ separately.

\textit{Case 1: When $\mu_i\in\{1,(d-1)(k-1)\}$.}
$\lambda_1=d(k-1)$ as a deterministic eigenvalue for $A$ yields two eigenvalues for $\tilde B$: $\mu_1=(d-1)(k-1)$ and $\mu_1'=1$. Let  $\tilde w_1$ be the eigenvector given by \eqref{eq:w_H} associated with $\mu_1$. One can directly check that 
      \begin{align}
          \frac{ \|\tilde w_1\|_{\infty}}{\|\tilde w_1\|_2}= \frac{1}{\sqrt{nd}}=\frac{1}{\sqrt{d}}\|v_i\|_{\infty}.
      \end{align}

\textit{Case 2: When $\mu_i\notin\{1,(d-1)(k-1)\}$.} By \cite[Theorem 11]{dumitriu2021spectra},when $d\geq k\geq 3$, we have any eigenvalues $ \lambda \neq d(k-1)$ of $A$ satisfies \begin{equation}\label{eq:hyper_lambda}
   -2 \sqrt{(k-1)(d-1)} +k-2 - \varepsilon_n  \leq \lambda  \leq 2 \sqrt{(k-1)(d-1)} +k-2 + \varepsilon_n,
\end{equation}
asymptotically almost surely with $\varepsilon_n \rightarrow 0 $ as $ n \rightarrow \infty$. With \eqref{eq:hyper_lambda} and  \eqref{eq:lowerbound_H}, we have, with high probability, \begin{align}
    \|\tilde w_i\|_2^2 &\geq (k-1)(\sqrt{d-1}-\sqrt{k-1})^2(\sqrt{(k-1)(d-1)}+1)^2+o(1) \label{eq:hyper_w2}.
\end{align}
By the spectral gap bounded for any eigenvalues $\mu \neq (d-1)(k-1)$ of $ B$ from \cite[Theorem 5.5]{dumitriu2021spectra}, we have \begin{equation}\label{eq:hyper_mu}
    |\mu| \leq \sqrt{(k-1)(d-1)}+o(1)
\end{equation}
with high probability. 
With \eqref{eq:hyper_w2} and \eqref{eq:hyper_mu}, we have, for $ d > k \geq 3$, with high probability, 
\begin{align}
    \frac{ \|\tilde w_i\|_{\infty}}{\|\tilde w_i\|_2} &\leq \frac{ ( \sqrt{(k-1)(d-1)} +1 ) \sqrt{k-1} +o(1)}{ \big( \sqrt{d-1 } - \sqrt{k-1} \big) \big( \sqrt{(k-1)(d-1)} +1 
    \big)  } \|v_i\|_{\infty}   \\
    &\leq \frac{  \sqrt{k-1}+o(1) }{  \sqrt{d-1 } - \sqrt{k-1}  } \|v_i\|_{\infty}  \label{eq:hyper_delocalization_2}.
\end{align}
Inequality \eqref{eq:hyper_delocalization_2} also holds when $\mu_i\in\{1,(d-1)(k-1)\}$. Thus, from the two cases above, Theorem~\ref{thm:bi_tildeB} (2) holds.
\end{proof}

\section{Proof of Theorem~\ref{thm:RSBM}}\label{sec:proof_RSBM}
We need the following spectral gap estimate for the adjacency matrix of an RSBM from \cite{brito2016recovery}.
\begin{lemma}\label{lem:spectral_gap}
Let $A$ be the adjacency matrix of an $(n,d_1,d_2)$-regular stochastic block model. Then with high probability, $(d_1+d_2), (d_1-d_2)$ are two eigenvalues of $A$ with multiplicity one. Moreover, for any $\varepsilon>0$,  all eigenvalues $\lambda\notin \{d_1+d_2,d_1-d_2\}$ satisfy 
\begin{align}\label{eq:spectral_bound}
   \lim_{n\to\infty} \mathbb P \left( |\lambda|\leq 2\sqrt{d_1+d_2-1}+\varepsilon\right)= 0.
\end{align}

\end{lemma}
\begin{proof}
  This is shown in  \cite[Section 4]{brito2015recovery} by using the result of the spectral gap of random lifts from \cite[Corollary 24]{bordenave2020new}, and contiguity of the permutation model and the configuration model of regular graphs from \cite[Theorem 1.3]{greenhill2002permutation}.
\end{proof}

With Lemma~\ref{lem:spectral_gap}, we prove Theorem~\ref{thm:RSBM} by using the spectral relation between $A$ and $\tilde B$ established in Lemma~\ref{lem:eiganvalues}.
\begin{proof}[Proof of Theorem~\ref{thm:RSBM}]
    Since an $(n,d_1,d_2)$-regular stochastic block model is a $(d_1+d_2)$-regular graph, the first statement follows from Lemma~\ref{lem:eiganvalues} directly.

    For the second statement, let $A$ be the adjacency matrix of an $(n,d_1,d_2)$ regular stochastic block model. Since $d_1-d_2$ is an eigenvalue for $A$ with associated eigenvector $\sigma$, from Lemma~\ref{lem:eiganvalues}, there are two eigenvalues of $\tilde B$ satisfying 
    \begin{align}
    \mu^2-(d_1-d_2)\mu+(d_1+d_2-1)=0.
    \end{align}
    When $(d_1-d_2)^2>4(d_1+d_2-1)$, the two eigenvalues are real, and the second statement holds.

    We now show the third statement. From Lemma~\ref{lem:spectral_gap}, since $d_1+d_2, d_1-d_2$ are two eigenvalues of $A$ with multiplicity one with high probability,  Lemma~\ref{lem:eiganvalues} shows that $d_1+d_2-1,1, \mu_2,\mu_2'$ are four real eigenvalues of $\tilde B$ with multiplicity one with high probability when $(d_1-d_2)^2>4(d_1+d_2-1)$.
Let $\lambda$ be an eigenvalue of $A$ with $\lambda\not\in \{d_1+d_2,d_1-d_2\}$. With Lemma~\ref{lem:eiganvalues}, $|\lambda|\leq 2\sqrt{d_1+d_2-1}+\varepsilon$ with high probability, and the corresponding two eigenvalues $\mu,\mu'$ of $\tilde B$ satisfies 
 \begin{align}  
    x^2-\lambda x+(d_1+d_2-1)=0.
    \end{align}
  We consider two cases:
    \begin{enumerate}
        \item If $|\lambda| \leq 2\sqrt{d_1+d_2-1}$, then the associated two eigenvalues have $|\mu|=|\mu'|=\sqrt{d_1+d_2-1}$.
        \item If $|\lambda|\in [2\sqrt{d_1+d_2-1},2\sqrt{d_1+d_2-1}+\varepsilon]$, $\mu,\mu'$ are real and 
        \begin{align}
            \mu,\mu'=\frac{\lambda\pm \sqrt{\lambda^2-4(d_1+d_2-1)}}{2}.
        \end{align}
        We have  for any $\varepsilon\in (0,1)$, with high probability,
        \begin{align}
            |\mu-\sqrt{d_1+d_2-1}|, |\mu'-\sqrt{d_1+d_2-1}|\leq C\sqrt{\varepsilon}  
        \end{align} 
        for a constant $C$ depending only on $(d_1+d_2-1)$.
    \end{enumerate}
   Therefore, all  eigenvalues $\lambda\not\in $$\{d_1+d_2-1,1,\mu_2,\mu_2' \}$ are within $o(1)$ distance from the circle of radius $\sqrt{d_1+d_2-1}$. This completes the proof of Theorem~\ref{thm:RSBM}.
\end{proof}

\section{Erd\H{o}s-R\'{e}nyi graphs}\label{sec:ER}
Studying bulk non-backtracking eigenvectors for Erdős-Rényi graphs $G(n,p)$ is an interesting open problem. Previously, using the fact that when \( np=\omega(\log n) \) the degrees in \( G(n,p) \) are concentrated around \( np \), \cite{wang2017limiting,coste2021eigenvalues} studied the eigenvalues of \( \tilde{B} \) by approximating the degree matrix $D$ with $(np)I_n$, and analyzed a perturbed quadratic equation from the Ihara-Bass formula. However, it is not clear to us how to generalize this partial de-randomization argument introduced in \cite{wang2017limiting} to study eigenvector delocalization. We conjecture that non-backtracking eigenvector delocalization holds when \( np=\omega(\log n) \)  and does not hold for \( np=o(\log n) \), similar to the results established for the adjacency matrix \cite{tran2013sparse,dumitriu2019sparse,he2019local,alt2022completely,alt2021delocalization,alt2023poisson}. See Figures \ref{fig:conjecture} for simulation results.

\begin{figure} 
    \centering
    \includegraphics[width=\linewidth]{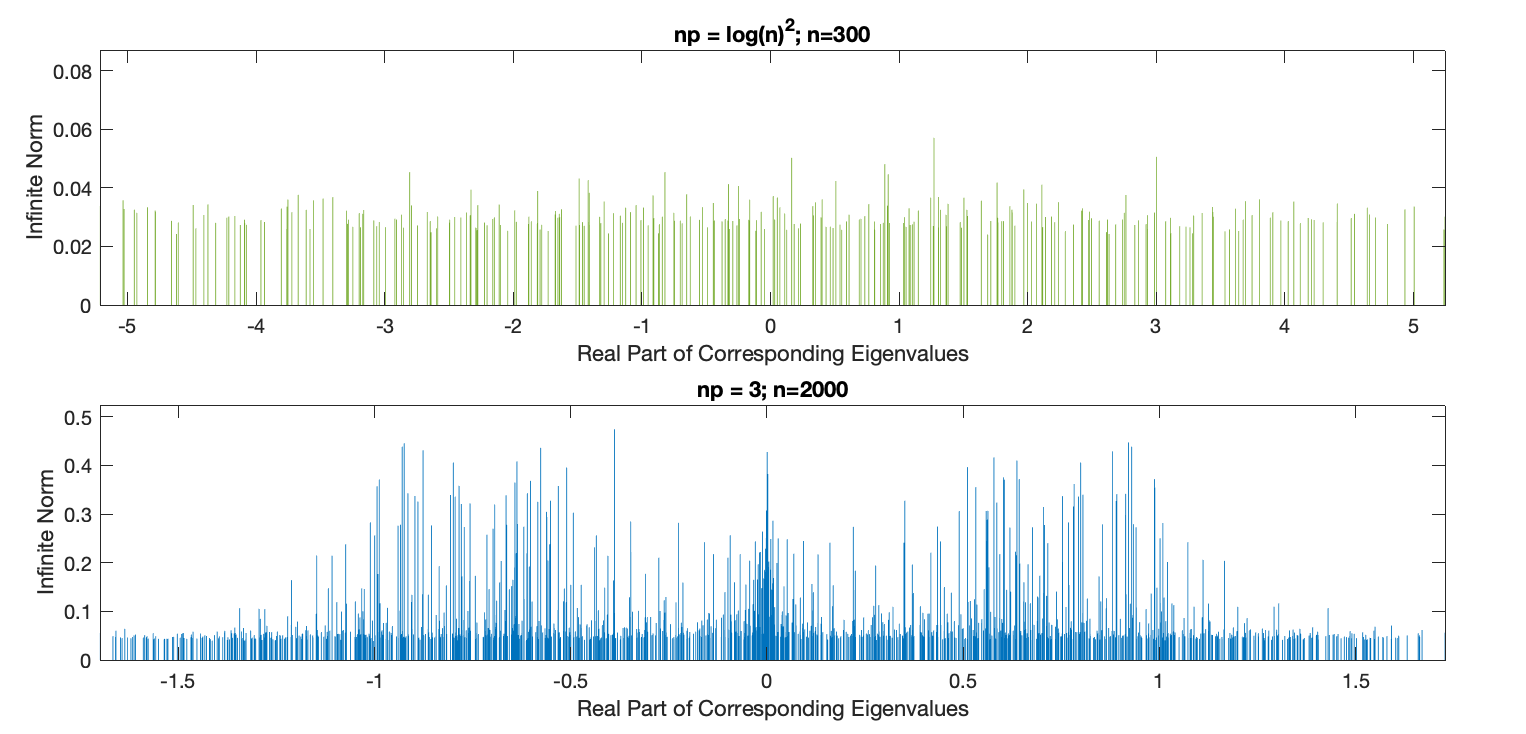}
    \vspace{-15pt}
    \caption{Simulation for Erd\H{o}s-R\'{e}nyi graphs. The upper graph is a simulation for $np = \log^2 n$ with $n = 300$; the lower graph is a simulation for $np = 3$ with $n = 2000$.} 
    \label{fig:conjecture} 
\end{figure}

\subsection*{Acknowledgments} Y.Z. thanks Paolo
Barucca, Ludovic Stephan, and Ke Wang for helpful discussions. X.Z. was supported by an AMS Undergraduate Travel Grant to present this work at Joint Mathematics Meetings 2024. Y.Z. was partially supported by the NSF-Simons Research Collaborations on the Mathematical and Scientific Foundations of Deep Learning and an AMS-Simons Travel Grant. Part of this work was completed during Y.Z.'s visit to Université Paris Cité, for which Y.Z. thanks Simon Coste for his warm hospitality.

\bibliographystyle{plain}
\bibliography{ref}

\end{document}